\newtheorem{thm}{Theorem}[section]
\newtheorem{prop}[thm]{Proposition}
\newtheorem{df}[thm]{Definition}
\newtheorem{lem}[thm]{Lemma}
\newtheorem{cor}[thm]{Corollary}
\newtheorem{ex}[thm]{Example}
\newtheorem{rem}[thm]{Remark}
\def\N{\mathbb{N}}
\def\N{\mathbb{N}}
\def\UU{\mathcal{U}}
\def\diam{\text{\rm diam}}
\def\top{\text{\rm top}}
\def\Leb{\text{\rm Leb}}
\def\over{\overline{\rm mdim}_{M}(T,X,d)}
\def\logf{\log\frac{1}{\epsilon}}
\numberwithin{equation}{section}
\title{Upper metric mean dimensions with potential of  $\epsilon$-stable sets}
\author{Rui Yang$^{1,2}$, Ercai Chen$^{2}$ and Xiaoyao Zhou$^{2*}$
}
\address
{1.College of Mathematics and Statistics, Key Laboratory of Nonlinear Analysis and its Applications (Chongqing University), Ministry of Education, Chongqing University, Chongqing 401331, P.R.China}
\address
{2.School of Mathematical Sciences and Institute of Mathematics, Ministry of Education Key Laboratory of NSLSCS, Nanjing Normal University, Nanjing, 210023, Jiangsu, P.R.China}
\email{zkyangrui2015@163.com}
\email{ecchen@njnu.edu.cn}
\email{zhouxiaoyaodeyouxian@126.com}
\date{}
\begin{document}

\maketitle

\renewcommand{\thefootnote}{}
\footnote{2020 \emph{Mathematics Subject Classification}:   37A15, 37C45  }
\footnotetext{\emph{Key words and phrases}: Upper metric mean dimensions with potential; Blocks of $\epsilon$-stable sets; $\epsilon$-stable sets; Preimages of $\epsilon$-stable sets; Stable sets and  chaoticity}
\footnote{*corresponding author}
\renewcommand{\thefootnote}{\arabic{footnote}}

\begin{abstract}
It is well-known that  $\epsilon$-stable sets have a deep connection with the topological entropy of dynamical systems. In the  present paper, we investigate  the relationships of   three types of  upper metric mean dimensions with potential between  \emph{the blocks of $\epsilon$-stable sets, $\epsilon$-stable sets, the dispersion of preimages of $\epsilon$-stable sets} and  the whole phase space. Besides,  some  chaotic phenomenons are revealed in infinite entropy systems.

As  an application of main results,  we  show  tail entropy, preimage neighborhood entropy and topological entropy  have the same metric mean dimension; 
\end{abstract}
\tableofcontents

\section{Introduction} 
Topological entropy is a vital  topological invariant  for us to   understand the dynamics of the systems.  Due to the possible values of topological entropy, dynamical systems can  be  divided into zero entropy systems,  infinite positive entropy systems and infinite entropy systems.
In 2000,  by dynamicalizing  the definition of  Minkowski dimension,    the concept of metric mean dimension  was  introduced  by  Lindenstrauss and Weiss  \cite{lw00} for topological dynamical systems.   One may  understand metric mean dimension measures   the divergent rate of  $\epsilon$-topological entropy  of   infinite entropy systems.  It turns out that  the system with finite topological entropy  has zero metric mean dimension.  This verifies  metric mean dimension   is an  useful quantity to classify infinite   entropy systems.  To  offset the role of  thermodynamic formalism of  infinite entropy systems, some  more extensive notions, which are called upper metric mean dimension with potentials,  were further introduced based on  the classical topological pressure  and investigated by several authors in  \cite{tsu22,cls21,ycz22}.

  To   detect  the topological complexity of  a given   abstract system, a common approach  is, from   geometric and  topological  viewpoints, investigating some   typical  subsets  consisting of the phase space   with  the similar asymptotic dynamical behavior  as the time evolves. Namely, one can quantitatively  characterize the ``size"  of   a  typical set  by  estimating  its  fractal dimensions,  dimensional  entropies  and many other entropy-like quantities of  the set. Different types of typical sets capture and record the history of the dynamics after a long time, hence it has  special meanings to the dynamical systems.  This leads to abundant work  involving the  investigation of many  important  certain typical sets by using Bowen topological entropy, Pesin topological pressure and metric mean dimension,  for instances,  the sets of generic point of ergodic (or invariant) measures~\cite{b73,pp84,ps07,pc10},  level sets \cite{tv03,tho09,bf23} and irregular sets of Birkhoff ergodic average~\cite{cts05,tho10,lp21,fp24,ll24}, which  provides a  quantitative perspective of  describing the complexity of systems.    The present paper  focuses on $\epsilon$-stable  sets of infinite entropy systems.

  By a pair $(X,T)$ we mean a topological dynamical system (TDS for short), where $X$ is a compact  metrizable  topological space  with a (compatible) metric $d$ and  $T$ is a continuous self-map on $X$.
   Given $\epsilon >0$ and $x\in X$,  \emph{the $(m,n)$-block of  $\epsilon$-stable set  of $x$},  \emph{$\epsilon$-stable  set of $x$} \cite{bow72}, and  \emph{the dispersion of preimages of $\epsilon$-stable set of $x$}\cite{ffn03}  are respectively given by 
  \begin{align*}
  &W_{\epsilon, [m,n]}^s(x,T)=\{y\in X:d(T^jx,T^jy)\leq\epsilon, \text{for all }j\in [m,n]\},\\
  &W_\epsilon^s(x,T)=\{y\in X:d(T^jx,T^jy)\leq\epsilon, \text{for all }j=0,1,...\},~\text{and}\\
  &\{T^{-n}W_\epsilon^s(x,T)\}_{n\geq 1}.
  \end{align*}

   The following facts  confirm that  topological entropy  of    phase space  has a deep connection with  $\epsilon$-stable sets. 
  For  a non-empty subset $Z\subset X$, let $r_n(T,Z,d,\epsilon)$ denote   the    smallest  cardinality  of  closed Bowen balls $\overline{B}_n(x,\epsilon):=W_{\epsilon, [0,n-1]}(x,T)$ covering $Z$.   Then  \emph{topological entropy of $Z$} (or also known as upper capacity topological entropy\cite{p97}) is  exactly given by   $$h_{\top}(T, Z)=\lim\limits_{\epsilon \to 0} h_{\top}(T,Z,d,\epsilon),$$  where 
 $h_{\top}(T,Z,d,\epsilon)=  \limsup_{n\to \infty} \frac{\log r_n(T,Z,d,\epsilon)}{n}$ is   called the $\epsilon$-topological entropy of $Z$.   By definition of topological entropy, it is in general more difficult to give an upper bound of entropy.
In 1972,  Bowen \cite[Theorem 2.4]{bow72} showed that $h_{\top}(T, X)$ can be estimated by using  $\epsilon$-topological entropy  of $X$ and  the  topological entropy of $\epsilon$-stable sets.  Precisely,  for every  $\epsilon>0$, $$h_{\top}(T,X)\leq   h_{\top}(T,X,d,\epsilon)+\sup_{x\in X} h_{\top}(T, W_\epsilon^s(x,T)).$$
For finite  entropy systems, the second term $\sup_{x\in X} h_{\top}(T, W_\epsilon^s(x,T))$  can be  nearly negligible  in  terms of entropy,  for instance, $h$ expansive systems.   However, for infinite entropy systems,  since  $h_{\top}(T,X,d,\epsilon)$ is finite,   so the second  term is always equal to $\infty$ for every $\epsilon>0$, which  contributes a main part for topological entropy.  The  analogous  inequalities still hold for Bowen and packing topological entropies  of non-empty subset $Z$ of $X$ \cite{wa22}.   Later, Fiebig, Fiebig and Nitecki \cite{ffn03} defined a quantity $$h_s(T,x,\epsilon)=\lim\limits_{\delta \to 0} \limsup_{n \to \infty}\frac{1}{n}\log r_n(T, T^{-n}W_\epsilon^s(x,T),d,\delta)$$ to  describe  the dispersion of preimages of  the  $\epsilon$-stable sets,  and  showed  that  if $X$ has finite (Lebesgue) covering dimension, then for every $\epsilon >0$,  $$h_{\top}(T,X)=\sup_{x\in X} h_s(T,x,\epsilon).$$ 
Using ergodic theory  and  invoking some tools, called the natural extension of entropy and  an  \emph{``excellent partition lemma"} developed in  \cite[Lemma 4]{bhr02},   Huang \cite{h08}  showed the  finite-dimensionality hypothesis is  in fact \emph{redundant} for non-invertible  surjective TDSs. In that paper, he also  revealed that some   chaotic  phenomenons  can appear in the stable and unstable sets for systems with positive entropy. More precisely,  for any positive entropy system, there is a measure-theoretically ``rather big"  set such that the closure of the stable or unstable set of any point from the set contains a weak mixing set. Based on the work of \cite{bow72,ffn03,h08}, in the infinite entropy systems it is reasonable to  ask the following questions.\\
 
 \emph{Question 1:  What is the precise relations   in terms of the upper metric mean dimensions with potential of the three  types of $\epsilon$-stable sets  and the phase space?}
 
    \emph{Question 2:   Is there a measure-theoretically ``rather big"  set such that the closure of the stable set  of any point from the set contains a weak mixing set? Furthermore,  can we estimate the divergent rate of weak mixing set by metric mean dimension?} 
    
    It is difficult  for the authors to develop an  \emph{``excellent partition lemma"}  for  infinite measure-theoretic entropy systems. So we fail to   solve the \emph{Question 1} by  combining the measure-theoretical techniques used in \cite{h08}. Conversely, we  obtain  the results  for  \emph{Question 1} by some topological approaches inspired by  the work  \cite{bow72,b73,yz07,o911,wa22}.
 
   In fact, for $(m,n)$-blocks of  the $\epsilon$-stable sets and the $\epsilon$-stable sets,   we can consider some  special times for  the orbit of $x$ by replacing   $[m,n]$ with a finite sequence  $\mathcal{S}^{*}$ of $\mathbb{N}$, $[0,\infty)$ with  a finite or infinite sequence  $\mathcal{S}$ of $\mathbb{N}$.
   
   We present  our main results as follows.
   \begin{thm}\label{thm 1.1}
  Let $(X,T)$ be a TDS with a metric $d$ and $f\in C(X, \mathbb{R})$.  Then  for every $\epsilon >0 $ and a finite sequence  $\mathcal{S}^{*}$ of $\mathbb{N}$,
 \begin{align*}
\overline{\rm mdim}_M(T,f,X,d)&=\max_{x\in X}\overline{\rm mdim}_M^B(T,f,W_{\epsilon,   \mathcal{S^{*}}}^s(x,T),d)\\
&=\max_{x\in X}\overline{\rm mdim}_M^P(T,f,W_{\epsilon,   \mathcal{S^{*}}}^s(x,T),d)\\
&=\max_{x\in X}\overline{\rm mdim}_M(T,f,W_{\epsilon,   \mathcal{S^{*}}}^s(x,T)),d).
\end{align*} 
where $\overline{\rm mdim}_M(T,f,Z,d)$, $\overline{\rm mdim}_M^B(T,f,Z,d)$, $\overline{\rm mdim}_M^P(T,f,Z,d)$  denote the upper metric mean dimension of $Z$ with potential $f$,  Bowen upper metric mean dimension  of $Z$ with potential $f$, and  packing upper metric mean dimension  of $Z$ with potential $f$, respectively.
   \end{thm}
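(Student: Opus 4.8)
The plan is to squeeze all four quantities between $\overline{\rm mdim}_M(T,f,X,d)$ and itself, so the core is a single nontrivial inequality.

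\textbf{The upper bounds.} For an arbitrary non-empty $Z\subset X$, at every fixed scale $\delta$ the pressures entering the definitions obey the Carath\'eodory-type chain (Bowen $\le$ packing $\le$ upper capacity) together with monotonicity under $Z\subset X$; dividing by $\log(1/\delta)$ and letting $\delta\to 0$ gives
\[
\overline{\rm mdim}_M^B(T,f,Z,d)\le \overline{\rm mdim}_M^P(T,f,Z,d)\le \overline{\rm mdim}_M(T,f,Z,d)\le \overline{\rm mdim}_M(T,f,X,d).
\]
Taking $Z=W_{\epsilon,[m,n]}^s(x,T)$ and then the maximum over $x$ shows that all three ``$\max_x$'' quantities are $\le \overline{\rm mdim}_M(T,f,X,d)$. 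Hence it suffices to bound $\overline{\rm mdim}_M(T,f,X,d)$ from above by the smallest of them, the Bowen one.

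\textbf{Reduction to a finite cover.} For each $x$ one has $W_{\epsilon,[m,n]}^s(x,T)=T^{-m}\overline B_{\,n-m+1}(T^mx,\epsilon)$, and this set contains $\bigcap_{j=m}^{n}\{y:d(T^jx,T^jy)<\epsilon\}$, an open neighbourhood of $x$; by compactness of $X$ there exist $x_1,\dots,x_K$, with $K$ depending only on $\epsilon$ and $[m,n]$ and on no small scale, such that $X=\bigcup_{i=1}^K W_{\epsilon,[m,n]}^s(x_i,T)$. The $\delta$-localized Bowen and packing pressures with potential $f$ are finitely stable under unions, $P_B(T,f,\bigcup_iZ_i,d,\delta)=\max_i P_B(T,f,Z_i,d,\delta)$ and similarly for the packing pressure; since the index set is finite this maximum commutes with $\limsup_{\delta\to 0}$ of the normalized pressures, so
\[
\overline{\rm mdim}_M^B(T,f,X,d)=\max_{1\le i\le K}\overline{\rm mdim}_M^B(T,f,W_{\epsilon,[m,n]}^s(x_i,T),d),
\]
and likewise for the packing version; for the upper-capacity version a pigeonhole estimate on $(n,\delta)$-separated subsets of $X$ (each such set splits into $K$ pieces inside the $W_{\epsilon,[m,n]}^s(x_i,T)$, giving $e^{P_n(T,f,X,d,\delta)}\le K\max_i e^{P_n(T,f,W_{\epsilon,[m,n]}^s(x_i,T),d,\delta)}$ for the partition functions) yields the matching $\overline{\rm mdim}_M(T,f,X,d)\le\max_i\overline{\rm mdim}_M(T,f,W_{\epsilon,[m,n]}^s(x_i,T),d)$. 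Combined with the previous paragraph this already proves the last equality of the theorem and shows each maximum is attained on $\{x_1,\dots,x_K\}$. Everything therefore collapses to proving the one inequality $\overline{\rm mdim}_M(T,f,X,d)\le\overline{\rm mdim}_M^B(T,f,X,d)$ on the whole space.

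\textbf{The main obstacle.} This last inequality is where the real work lies; it is a scale-wise Bowen covering lemma. One must show: if $\{\overline B_{n_i}(x_i,\delta)\}_i$ is any cover of $X$ with $n_i\ge N$, then $\sum_i\exp\bigl(-sn_i+\sup_{\overline B_{n_i}(x_i,\delta)}S_{n_i}f\bigr)$ stays bounded below by a fixed positive constant whenever $s<P(T,f,X,d,2\delta)$, where $S_nf=\sum_{j=0}^{n-1}f\circ T^j$ and $P(T,f,X,d,2\delta)$ is the upper-capacity pressure at scale $2\delta$, which on the whole compact space equals the lower one because the limit over $n$ exists. Feeding this into the definition of $P_B$ gives $P_B(T,f,X,d,\delta)\ge P(T,f,X,d,2\delta)$, and since $\log(1/\delta)\sim\log\bigl(1/(2\delta)\bigr)$ this survives division and $\limsup_{\delta\to0}$, yielding $\overline{\rm mdim}_M^B(T,f,X,d)\ge \overline{\rm mdim}_M(T,f,X,d)$; with the trivial reverse inequality the three flavours coincide on $X$ and the theorem follows. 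For $f\equiv 0$ this covering lemma is the metric-mean-dimension counterpart of Bowen's equality $h^B_{\top}(T,X)=h_{\top}(T,X)$; the potential is carried along using $|S_nf(y)-S_nf(z)|\le n\,\mathrm{var}(f,\delta)$ for $z\in\overline B_n(y,\delta)$, whose contribution $\mathrm{var}(f,\delta)/\log(1/\delta)\to 0$ washes out in the limit. I expect the bookkeeping in this lemma — reducing to finite covers, tracking the $\delta$-versus-$2\delta$ scale shift, and absorbing the non-constant modulus term — to be the only genuinely delicate part, all the other steps being routine once the definitions of the three mean-dimension variants are in place.
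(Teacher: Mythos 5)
Your proposal is correct, and it reaches the theorem by a slightly different packaging of the localization step than the paper does. Where you cover $X$ by finitely many sets $W^s_{\epsilon,[m,n]}(x_i,T)$ (each contains the open set $\bigcap_{j=m}^{n}\{y:d(T^jx,T^jy)<\epsilon\}$, so compactness gives a finite subcover) and then invoke finite stability of the Bowen and packing quantities under unions, the paper instead introduces a local quantity $\overline{\rm mdim}_M^B(T,f,x,d)=\inf_K\overline{\rm mdim}_M^B(T,f,K,d)$ over closed neighbourhoods $K$ of $x$ and proves, via nested closed balls of shrinking diameter (again using finite stability, its Proposition 2.4(2)), that this local function attains the global value at some $x_0$; since $W^s_{\epsilon,[m,n]}(x,T)$ is a closed neighbourhood of $x$, the theorem follows. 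Both arguments rest on exactly the same two pillars, and yours is arguably more direct (it also immediately shows the maximum is attained on a finite set of points), while the paper's local pressure function is a reusable tool in the spirit of entropy points. The second pillar — your ``main obstacle'' $\overline{\rm mdim}_M(T,f,X,d)\le\overline{\rm mdim}_M^B(T,f,X,d)$ — is not proved from scratch in the paper either: it is quoted as Proposition 2.4(3) (equality of the three quantities on a $T$-invariant compact set, from [YCZ22, Prop.~3.4]), so you could simply cite it rather than redo the Bowen-type covering lemma. Your sketch of that lemma is the standard route and would work, but note two points if you carry it out: the weights in the Carath\'eodory sums must carry the factor $\log\frac{1}{\delta}$ multiplying $S_{n_i}f$ (your displayed sum omits it), and the passage from the limsup-capacity pressure to the liminf version on the whole space needs the usual sandwiching by open-cover counting at comparable scales, whose scale shift and $\gamma(\delta)$ corrections indeed wash out after dividing by $\log\frac{1}{\delta}$.
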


\begin{thm}\label{thm 1.2}
Let $(X,T)$ be a TDS with a metric $d$ and $f\in C(X, \mathbb{R})$.  Then  for  any  finite or infinite sequence  $\mathcal{S}$ of $\mathbb{N}$ and    $\epsilon >0$, 
\begin{align*}
\overline{\rm mdim}_M(T,f,X,d)&=\limsup_{\delta\to 0}\frac{1}{\log \frac{1}{\delta}}\sup_{x\in X}M_{\delta}(T,f,W_{\epsilon, \mathcal{S}}^s(x,T),d)\\
&=
\limsup_{\delta\to 0}\frac{1}{\log \frac{1}{\delta}}\sup_{x\in X} \mathcal{P}_{\delta}(T,f,W_{\epsilon, \mathcal{S}}^s(x,T),d)\\
&=
\limsup_{\delta\to 0}\frac{1}{\log \frac{1}{\delta}}\sup_{x\in X} P(T,f, W_{\epsilon, \mathcal{S}}^s(x,T),d,\delta).
\end{align*}   
\end{thm}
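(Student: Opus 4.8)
The equality of the three right-hand sides, together with their common value being $\overline{\rm mdim}_M(T,f,X,d)$, will follow from a single nontrivial inequality. Indeed, at a fixed scale $\delta$ one has the chain $M_\delta(T,f,Z,d)\le\mathcal{P}_\delta(T,f,Z,d)\le P(T,f,Z,d,\delta)$ for every non-empty $Z\subset X$ (the fixed-scale form of the comparison behind \cite[Proposition 3.4]{ycz22}, proved in the same way), and each of these three quantities is monotone under inclusion of sets. Taking $Z=W_\epsilon^s(x,T)\subset X$, passing to $\sup_{x\in X}$, normalising by $\log\frac1\delta$ and taking $\limsup_{\delta\to0}$ therefore gives
\begin{align*}
\limsup_{\delta\to0}\frac{1}{\log\frac1\delta}\sup_{x\in X}M_\delta(T,f,W_\epsilon^s(x,T),d)
&\le\limsup_{\delta\to0}\frac{1}{\log\frac1\delta}\sup_{x\in X}\mathcal{P}_\delta(T,f,W_\epsilon^s(x,T),d)\\
&\le\limsup_{\delta\to0}\frac{1}{\log\frac1\delta}\sup_{x\in X}P(T,f,W_\epsilon^s(x,T),d,\delta)\\
&\le\limsup_{\delta\to0}\frac{1}{\log\frac1\delta}P(T,f,X,d,\delta)=\overline{\rm mdim}_M(T,f,X,d),
\end{align*}
where the last inequality uses monotonicity of the $\delta$-pressure under $W_\epsilon^s(x,T)\subset X$. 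Hence it suffices to prove
\[
\overline{\rm mdim}_M(T,f,X,d)\ \le\ \limsup_{\delta\to0}\frac{1}{\log\frac1\delta}\sup_{x\in X}M_\delta(T,f,W_\epsilon^s(x,T),d).
\]

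To prove this reverse inequality I would run a Bowen-type argument at each fixed scale. Fix $\epsilon>0$ and set $h:=h_{\top}(T,X,d,\epsilon/4)<\infty$. Let $0<\delta<\epsilon/4$. Writing $S_nf=\sum_{j=0}^{n-1}f\circ T^j$, recall that $P(T,f,X,d,\delta)$ is $\limsup_{n}\frac1n\log\big(\sup_E\sum_{y\in E}e^{S_nf(y)}\big)$ over maximal $(n,\delta)$-separated sets $E$ of $X$. Along a sequence $n\to\infty$ realising this $\limsup$, pick such an $E_n$ with $\sum_{y\in E_n}e^{S_nf(y)}\ge e^{n(P(T,f,X,d,\delta)-o(1))}$ and cover $X$ by $r_n(T,X,d,\epsilon/4)\le e^{n(h+o(1))}$ closed Bowen balls $\overline{B}_n(z,\epsilon/4)$; distributing the mass of $E_n$ over these balls gives, by pigeonhole, a centre $z_n$ with $\sum_{y\in F_n}e^{S_nf(y)}\ge e^{n(P(T,f,X,d,\delta)-h-o(1))}$, where $F_n:=E_n\cap\overline{B}_n(z_n,\epsilon/4)$ is $(n,\delta)$-separated and lies in the finite block $W^s_{\epsilon/4,[0,n-1]}(z_n,T)$. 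Passing to a subsequence with $z_n\to x^*$ (compactness of $X$): if $w_n\in F_n$ and $w_n\to w$ along a further subsequence, then $d(T^jw_n,T^jz_n)\le\epsilon/4$ for $n>j$, so $d(T^jw,T^jx^*)\le\epsilon/4$ for every $j$, i.e. $w\in W^s_{\epsilon/2}(x^*,T)\subset W_\epsilon^s(x^*,T)$; thus every accumulation point of the $F_n$ lies in the closed set $W_\epsilon^s(x^*,T)$. Using the weighted sets $F_n$ along a sufficiently lacunary subsequence one then builds, by a Moran–Frostman (mass distribution) construction, a Borel probability measure $\mu$ with $\mu\big(W_\epsilon^s(x^*,T)\big)=1$ and
\[
\mu\big(\overline{B}_n(y,c\delta)\big)\ \le\ \exp\!\Big(-n\big(P(T,f,X,d,\delta)-h\big)+\sup_{z\in\overline{B}_n(y,c\delta)}S_nf(z)+o(n)\Big)
\]
for all large $n$ and all $y\in X$, with $c\ge1$ an absolute constant. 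The mass distribution principle for the Bowen pressure then yields $M_{c\delta}(T,f,W_\epsilon^s(x^*,T),d)\ge P(T,f,X,d,\delta)-h$, whence $\sup_{x}M_{c\delta}(T,f,W_\epsilon^s(x,T),d)\ge P(T,f,X,d,\delta)-h$. Dividing by $\log\frac1{c\delta}$, using $\log\frac1{c\delta}/\log\frac1\delta\to1$ and $h/\log\frac1\delta\to0$, and letting $\delta\to0$ along a sequence computing $\overline{\rm mdim}_M(T,f,X,d)=\limsup_\delta P(T,f,X,d,\delta)/\log\frac1\delta$, one obtains the desired reverse inequality; the continuous potential contributes only $\sup_{\overline{B}_n(y,c\delta)}S_nf-S_nf(y)\le n\,\omega_f(c\delta)$ (with $\omega_f$ the modulus of continuity of $f$), which is built into the Bowen pressure and is negligible after normalisation.

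The routine parts are the fixed-scale comparison and monotonicities used in the first paragraph; the real work is the measure $\mu$ in the second. It must simultaneously be carried by the \emph{single} stable set $W_\epsilon^s(x^*,T)$ — which constrains the admissible orbit segments to cluster around the orbit of the limit point $x^*$ — and be spread out across the branching of the weighted separated sets $F_n$ so that the local decay estimate holds; note in particular that the Bowen pressure is not continuous from above along decreasing intersections of compact sets, so one genuinely cannot pass to the limit in the block estimates of Theorem \ref{thm 1.1} and must select the centre $x^*$ by compactness as above. Reconciling these two requirements, in the presence of a potential and at a fixed metric scale, is exactly the delicate point already present in Bowen's \cite[Theorem 2.4]{bow72} and its refinements \cite{wa22,h08}, and that is where this proof should concentrate.
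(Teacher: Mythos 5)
Your first paragraph (the reduction of everything to one reverse inequality) is exactly how the paper begins, except that the fixed-scale comparisons are not literally $M_\delta\le\mathcal P_\delta\le P(\cdot,\delta)$ but carry scale shifts and error terms ($M_{3\delta}\le\mathcal P_\delta+\log 3\cdot||f||$, $\mathcal P_\delta\le Q(\cdot,\delta)$, $P(\cdot,\delta/2)\ge Q(\cdot,\delta)-||f||\log 2-\gamma(\delta/2)\log\frac1\delta$, Propositions \ref{prop 2.1} and \ref{prop 2.4}); these wash out after dividing by $\log\frac1\delta$, so that part is fine. The gap is in the reverse inequality: the Frostman measure $\mu$, which you yourself identify as ``the real work,'' is never constructed, and as sketched the construction faces concrete obstructions. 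The pigeonholed sets $F_n$ have different centres $z_n$ and different time lengths, are in no way nested or hierarchically compatible, and their points do not lie in $W_\epsilon^s(x^*,T)$ (only subsequential limit points do). A weak-$*$ limit of the weighted counting measures on the $F_n$ will in general violate the claimed decay $\mu(\overline{B}_n(y,c\delta))\le \exp(-n(P-h)+\sup S_nf+o(n))$ at any fixed order $n$: for $m\gg n$ the $(m,\delta)$-separated set $F_m$ must pile an amount of weight that is exponentially large in $m-n$ into single $(n,c\delta)$-Bowen balls, so the limit measure has no reason to spread at order $n$. A genuine Moran-type construction would require, inside a single stable set, a nested family of separated sets with controlled branching at every generation; nothing in the pigeonhole-plus-compactness step supplies this, and the quantity you must bound from below, $M_{c\delta}$, is a Bowen (Hausdorff-like) quantity of the limit set, whereas what the $F_n$ give you is capacity-type information about finite blocks, which (as you note yourself) does not pass to decreasing intersections. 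Flagging this reconciliation as ``the delicate point'' is not the same as proving it, so the proposal does not yet prove the inequality.

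The paper's proof avoids measures entirely and argues in the opposite direction (Lemma \ref{lem 3.5}): assuming $\lambda>\sup_{x\in X}M_{\delta/2}(T,f,W_\epsilon^s(x,T),d)$, each $W_\epsilon^s(y,T)$ admits a finite cover by Bowen balls of radius $\delta/2$ with total weight $M(y)<1$; since $W_\epsilon^s(y,T)=\cap_{n\ge1}\overline{B}_n(y,\epsilon)$, compactness upgrades this to a cover of $\overline{B}_{m(y)}(y,\gamma)$ for some $\gamma>\epsilon$ and finite $m(y)$, hence to a cover valid uniformly for all $z$ in a $d_{m(y)}$-neighbourhood $U_y$ of $y$; finitely many $U_{y_i}$ cover $X$, and concatenating these local covers along the orbit (the ``pigeon cage'' construction) produces an $(n,\delta)$-spanning set of every $\overline{B}_n(x,\epsilon)$ of weighted size at most $M\,2^{n||f||}(2/\delta)^{n\gamma(\delta)}e^{\lambda n}$. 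Summing over an $(n,\epsilon)$-spanning set of $X$ then gives $P(T,f,X,d,\delta)\le h_{\top}(T,X,d,\epsilon)+\lambda+||f||\log 2+\gamma(\delta)\log\frac2\delta$, and dividing by $\log\frac1\delta$ yields the missing inequality. To repair your argument you would either have to carry out the mass-distribution construction in full (essentially a Feng--Huang-type dynamical Frostman lemma adapted to a single stable set, which is a substantial piece of work not contained in your sketch), or replace it by this covering/concatenation argument, which is the actual content of the theorem.
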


To simplify  the notion, we  define the  \emph{exponential growth rate} of a  positive  real-valued sequence $\{p_n\}_{n\geq 1}$  as
$$GR(p_n)=\limsup_{n \to \infty}\frac{\log p_n}{n}.$$  
\begin{thm}\label{thm 1.3}
Let $(X,T)$ be a TDS with a metric $d$ and $f\in C(X, \mathbb{R})$.  Then  for every $\epsilon >0 $, 
\begin{align*}
\overline{\rm mdim}_M(T,f,X,d)=\limsup_{\delta\to 0}\frac{1}{\log \frac{1}{\delta}}GR(\sup_{x\in X} P_n(T,f, T^{-n}W_\epsilon^s(x,T),d,\delta)).
\end{align*}   
\end{thm}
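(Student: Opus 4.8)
The plan is to obtain Theorem~\ref{thm 1.3} as a short consequence of Theorem~\ref{thm 1.2}, using only two elementary set inclusions together with the monotonicity of the finite-scale partition functions; I do not expect to need any estimate beyond what is already packaged into Theorem~\ref{thm 1.2}. Write $P_n(T,f,Z,d,\delta)$ for the scale-$\delta$, time-$n$ partition function of a non-empty $Z\subseteq X$, so that $\overline{\rm mdim}_M(T,f,X,d)=\limsup_{\delta\to0}\frac{1}{\log\frac1\delta}\limsup_{n\to\infty}\frac1n\log P_n(T,f,X,d,\delta)$, abbreviate $P(T,f,Z,d,\delta):=\limsup_{n\to\infty}\frac1n\log P_n(T,f,Z,d,\delta)$, and recall $GR(p_n)=\limsup_{n\to\infty}\frac1n\log p_n$. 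Fix $\epsilon>0$. Throughout I use that $Z\mapsto P_n(T,f,Z,d,\delta)$ is monotone under set inclusion (if one works with the ``internal'' definition this holds up to a factor $e^{n\operatorname{var}(f,\delta)}$ and a halving of $\delta$, both harmless after dividing by $\log\frac1\delta$ and letting $\delta\to0$).

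The inequality ``$\ge$'' in Theorem~\ref{thm 1.3} should be immediate: since $T^{-n}W_\epsilon^s(x,T)\subseteq X$, one gets $\sup_{x\in X}P_n(T,f,T^{-n}W_\epsilon^s(x,T),d,\delta)\le P_n(T,f,X,d,\delta)$ for every $n$, so after applying $GR$, dividing by $\log\frac1\delta$ and taking $\limsup_{\delta\to0}$ the right-hand side of Theorem~\ref{thm 1.3} is at most $\overline{\rm mdim}_M(T,f,X,d)$.

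For the reverse inequality the one thing I would single out is the inclusion $W_\epsilon^s(x,T)\subseteq T^{-n}W_\epsilon^s(T^nx,T)$, valid for every $x\in X$ and $n\ge0$: if $d(T^jx,T^jy)\le\epsilon$ for all $j\ge0$ then $d(T^j(T^nx),T^j(T^ny))\le\epsilon$ for all $j\ge0$, i.e.\ $T^ny\in W_\epsilon^s(T^nx,T)$. Combined with monotonicity this gives, for each $x\in X$ and each $n$,
\[
P_n\big(T,f,W_\epsilon^s(x,T),d,\delta\big)\le P_n\big(T,f,T^{-n}W_\epsilon^s(T^nx,T),d,\delta\big)\le \sup_{x'\in X}P_n\big(T,f,T^{-n}W_\epsilon^s(x',T),d,\delta\big),
\]
and, the last expression being a single sequence in $n$ for fixed $\delta$, applying $\limsup_{n}\frac1n\log(\cdot)$ yields $P(T,f,W_\epsilon^s(x,T),d,\delta)\le GR\big(\sup_{x'\in X}P_n(T,f,T^{-n}W_\epsilon^s(x',T),d,\delta)\big)$ for every $x$. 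Taking $\sup_{x\in X}$ on the left (the right side does not involve $x$), dividing by $\log\frac1\delta$ and letting $\delta\to0$, I would deduce
\[
\limsup_{\delta\to0}\frac{1}{\log\frac1\delta}\sup_{x\in X}P\big(T,f,W_\epsilon^s(x,T),d,\delta\big)\ \le\ \limsup_{\delta\to0}\frac{1}{\log\frac1\delta}\,GR\Big(\sup_{x\in X}P_n\big(T,f,T^{-n}W_\epsilon^s(x,T),d,\delta\big)\Big).
\]
By Theorem~\ref{thm 1.2} the left-hand side equals $\overline{\rm mdim}_M(T,f,X,d)$, which is exactly the inequality still needed; together with the previous paragraph this proves Theorem~\ref{thm 1.3}.

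I do not anticipate a real obstacle here: the genuine work---the topological covering argument that sidesteps Huang's natural-extension and excellent-partition machinery---is what enters Theorem~\ref{thm 1.2}, and once that is available the present argument is essentially formal. The only points needing attention are that $GR$ must be applied \emph{after} taking the inner $\sup_{x}$, so that the inequality chain runs in the direction used above, and the standard bookkeeping of the potential through its modulus of continuity, which contributes only an $o(\log\frac1\delta)$ term. (If a proof of Theorem~\ref{thm 1.3} independent of Theorem~\ref{thm 1.2} were wanted, the same two inclusions together with the ``partition into $\epsilon$-types'' device used in the proof of Theorem~\ref{thm 1.1} would also work, but the reduction above is shorter.)
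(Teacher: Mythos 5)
Your proposal is correct and follows essentially the same route as the paper: both directions rest on the inclusions $T^{-n}W_\epsilon^s(x,T)\subseteq X$ and $W_\epsilon^s(x,T)\subseteq T^{-n}W_\epsilon^s(T^nx,T)$, monotonicity of the spanning-set partition functions, and an appeal to Theorem \ref{thm 1.2}. (Since spanning sets here are only required to lie in $X$, monotonicity is exact, so your parenthetical correction factors are not even needed.)
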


Measure entropy is considered to characterize the  ``size" of weakly mixing sets in \cite{h08}. A  measure-theoretic counterpart  for metric mean dimension that we need to  reflect the divergent rate of  measure-theoretic $\epsilon$-entropy  in  infinite entropy systems is the  $L^{\infty}$ rate-distortion dimension (see \cite{ycz23b} for more  results of this aspect). This concept, introduced by Lindenstrauss and Tsukamoto\cite{lt18}, is defined by using   $L^{\infty}$ rate-distortion  function of invariant measure $\mu$\footnote[1]{Due to the lengthy definition of rate-distortion function,  readers  can turn to  \cite[Sections III and IV]{lt18} for its precise definition.} as follows:
\begin{align*}
\overline{\rm rdim}_{L^{\infty}}(T,X,d,\mu)&=\limsup_{\epsilon \to 0}\frac{R_{\mu,L^{\infty}}(T, \epsilon)}{\log \frac{1}{\epsilon}}.
\end{align*}

The following result  is the first time  to attempt  to inject chaoticity theory into metric mean dimension theory.  For any ergodic measure $\mu$ with  positive $L^{\infty}$ rate-distortion dimension(hence having infinite measure-theoretic entropy),   for $\mu$-a.e. $x \in X$  there exists  a closed subset  in  the closure of  stable set  of $x$ such that the  closed set is weakly mixing for  $T$ and has positive $L^{\infty}$ rate-distortion dimension.

\begin{thm} \label{thm 1.4}
Let $(X,T)$ be a TDS with a metric $d$ and $\mu$ be an ergodic   Borel probability measure with $\overline{\rm rdim}_{L^{\infty}}(T,X,d,\mu)>0$. Then for $\mu$-a.e. $x \in X$,  there exists   a closed  subset $E(x) \subset \overline{W^s(x,T)}$ such that  
\begin{enumerate}
	\item  $E(x)$ is a weakly mixing set for $T$;
	\item $\overline{\rm {mdim}}_M(T,E(x),d)\geq \overline{\rm rdim}_{L^{\infty}}(T,X,d,\mu).$
\end{enumerate}
\end{thm}

The previous conclusion  in \cite[Theorem 5.5]{h08} says that $h_{top}(T,E(x))=h_{\mu}(T)=\infty$. Therefore, the Theorem \ref{thm 1.4}  strengthens  the   previous chaotic phenomenons: the closed weakly mixing sets have  the positive divergent rates in infinite  entropy systems. Furthermore, the precise  divergent rates  of  closed weakly mixing sets  is closely related with the metric mean dimension of the phase space.

\begin{thm} \label{thm 1.5}
	Let  $(X,d)$ be a compact space with metric $d$ and $T$ be a surjective map on $X$. Then 
	 $$\overline{\rm {mdim}}_M(T,X,d)=\limsup_{\epsilon \to 0}\frac{1}{\log\frac{1}{\delta}}\sup_{x\in X}h_{top}(T,W^s(x,T),d,\delta).$$
\end{thm}

The rest of this paper is organized as follows. In section 2,  we recall the definitions of  three types of metric mean  dimensions  with  potential  on subsets defined by spanning sets and Carath\'eodory-Pesin structures.   In  section 3,  we prove  Theorems  \ref{thm 1.1}, \ref{thm 1.2},  and \ref{thm 1.3}.  In section 4,  we prove  Theorems   \ref{thm 1.4} and \ref{thm 1.5}. In section 5, we   exhibit an  application of  main results. In section 6, we   present some  open questions  suggested by the main results.

\section{Preliminary}

In this section,  we recall that the  definitions of upper metric mean dimensions with potential function   on subsets defined by spanning sets, separated sets  \cite{t20} and   Carath\'eodory-Pesin structures \cite{cls21,ycz22},  and derive some  basic properties  needed in next section.

  Let $C(X, \mathbb{R})$ denote the Banach space consisting  of all  real-valued continuous functions on $X$, which is equipped with supremum norm $||f||:=\sup_{x\in X}{|f(x)|}$. For  given $n\in \mathbb{N}$, $x\in X$, $\epsilon >0$,   we define  the $n$-th Birkhoff  sum of $f$  along the orbit $\{x,T^x,...,T^{n-1}x\}$ of $x$ as $S_nf(x)=\sum_{j=0}^{n-1}f(T^jx)$ and ``gap function" as  $$\gamma(\epsilon)=\sup\{|f(x)-f(y)|:d(x,y)<\epsilon\}.$$ By  $$d_n(x,y):=\max_{0\leq j\leq n-1}\limits d(T^{j}(x),T^j(y))$$ we denote  the Bowen metric on $X$. Then the  \emph{Bowen open   and closed balls} centered at $x$  with radius $\epsilon$ in the metric $d_n$  are   respectively given by 
$$B_n(x,\epsilon)=\{y\in X: d_n(x,y)<\epsilon\},$$
$$ \overline B_n(x,\epsilon)=\{y\in X:d_n(x,y)\leq\epsilon\}.$$

For  a non-empty subset $Z\subset X$,  a set $F\subset Z$ is  \emph{an $(n,\epsilon)$-separated set of $Z$} if $d_n(x,y)\geq\epsilon$ for any  $x,y \in F$ with $x\not= y$. Denote by $s_n(T, Z, d,\epsilon)$ the  largest  cardinality  of $(n,\epsilon)$-separated sets of $Z$.  A  set $E\subset X$ is  \emph{an $(n,\epsilon)$-spanning set of $Z$} if for any $x\in Z$, there exists $y \in E$ such that $d_n(x,y)<\epsilon$.  Denote by $r_n(T,Z, d,\epsilon)$ the   smallest  cardinality  of $(n,\epsilon)$-spanning sets of $Z$. 
Put 
$$P(T,f, Z,d,\epsilon)=\limsup_{n\to \infty} \frac{1}{n} \log P_n(T,f, Z,d,\epsilon),$$
where $ P_n(T,f, Z,d,\epsilon)=\inf\{\sum_{x\in E}(1/\epsilon)^{S_nf(x)}\}$  with  infimum  taken over  all   $(n,\epsilon)$-spanning sets of $Z$, and
$$Q(T,f, Z,d,\epsilon)=\limsup_{n\to \infty} \frac{1}{n} \log \sup\{\sum_{x\in F}(1/\epsilon)^{S_nf(x)}\}$$
with  supremum   taken over  all   $(n,\epsilon)$-separated sets of $Z$.

\begin{prop} \label{prop 2.1}
Let $Z$ be a non-empty subset of $X$ and $f\in C(X, \mathbb{R})$.  
Then for every $0<\epsilon <1$, we have
$$P(T,f, Z,d,\epsilon)\leq Q(T,f, Z,d,\epsilon)$$
and 
$$P(T,f, Z,d,\frac{\epsilon}{2})\geq Q(T,f, Z,d,\epsilon)-||f||\log 2-\gamma(\frac{\epsilon}{2})\logf.$$

\end{prop}

\begin{proof}
Fix $0<\epsilon <1$. Notice that  an $(n,\epsilon)$-separated set  of $Z$ with  the largest cardinality is also an $(n,\epsilon)$-spanning set of $Z$. Therefore, by definitions one has 
$P(T,f, Z,d,\epsilon)\leq Q(T,f, Z,d,\epsilon)$.

On the other hand, let $F$ be an $(n,\epsilon)$-separated set of $Z$ and $E$ be an $(n,\frac{\epsilon}{2})$-spanning set of $Z$. Consider the mapping $\Phi: F \rightarrow E$ which assigns each $x\in F$ to  $\Phi(x)\in E$ such that $d_n(x,\Phi(x))<\frac{\epsilon}{2}$. Then $\Phi$ is  injective.
Hence,
\begin{align*}
\sum_{x\in E}(2/\epsilon)^{S_nf(x)}&\geq  2^{-n||f||}\sum_{x\in F}(1/\epsilon)^{S_nf(\Phi(x))}\\
&\geq  2^{-n||f||}(1/\epsilon)^{-\gamma(\frac{\epsilon}{2})n}\sum_{x\in F}(1/\epsilon)^{S_nf(x)}.
\end{align*}
It follows that $P(T,f, Z,d,\frac{\epsilon}{2})\geq Q(T,f, Z,d,\epsilon)-||f||\log 2-\gamma(\frac{\epsilon}{2})\logf$.
\end{proof}

By Proposition \ref{prop 2.1}, one can  equivalently  define the upper  metric mean dimension with potential by utilizing separated sets and spanning sets, which  is analogous to  the definitions of   classical topological pressure presented in  \cite{w82}.

\begin{df}
We define upper metric mean dimension  with potential  $f$ of  $Z$ as
\begin{align*}
\overline{\rm mdim}_{M}(T,f,Z,d)= \limsup_{\epsilon\to 0}\frac{P(T,f, Z,d,\epsilon)}{\log \frac{1}{\epsilon}}= \limsup_{\epsilon\to 0}\frac{Q(T,f, Z,d,\epsilon)}{\log \frac{1}{\epsilon}}.
\end{align*}
\end{df}

When $f=0$ is a zero potential,   we let $h_{\top}(T, Z,d,\epsilon):=P(T,0, Z,d,\epsilon)$, which  is reduced to the  concept of \emph{upper metric mean dimension   of $Z$}, introduced by Lidenstrauss and Weiss in \cite{lw00}, that is, 
 $$\overline{\rm mdim}_{M}(T,Z,d)= \limsup_{\epsilon\to 0}\frac{h_{\top}(T, Z,d,\epsilon)}{\log \frac{1}{\epsilon}}.$$



Inspired by the  definitions of Hausdorff and packing dimension in fractal geometry,    the   dimensional-type topological   entropy-like quantities, which we call  dimension   entropies,  for instance, Bowen topological entropy \cite{b73}, packing topological entropy\cite{fh12} and upper capacity entropy\cite{bow72},  were  defined    via  Carath\'eodory-Pesin structures as well as the potential version.  Such formulations  sometimes offers us a change of invoking  the relevant tools appeared in dimension theory to characterize the  topological complexity of systems.

 The  previous dimension entropy-like quantities  no longer provide more information  about  the dynamics of system once  the entropies are infinite. To capture the  topological complexity of infinite entropy systems, by using  Carath\'eodory-Pesin structures, the authors \cite{cls21,ycz22}  further gave  dimensional  characterizations  for metric mean dimensions with potential  on subsets. 
\begin{df}
Let  $Z\subset X$, $f\in C(X, \mathbb{R})$,   $\epsilon>0$, $N\in \mathbb{N}$ and $s \in \mathbb{R}$.  
Put
$$M_{N,\epsilon}^s(T,f,Z,d)=\inf\{\sum_{i\in I}\limits  e^{-n_i s+\logf \cdot S_{n_i}f(x_i)}\},$$
where the infimum  is taken over all  finite or countable covers $\{B_{n_i}(x_i,\epsilon)\}_{i\in I}$ of $Z$   with $n_i \geq N,x_i \in X$ for all $i\in I$.

Let $M_{\epsilon}^s(T,f,Z,d)=\lim\limits_{N\to \infty}M_{N,\epsilon}^s(T,f,Z,d)$.It is readily  to check that  there is  a critical value  of parameter $s$  so that  $M_{\epsilon}^s(T,f,Z,d)$  jumps from $\infty$ to $0$.  The  critical value is defined by 
\begin{align*}
M_{\epsilon}(T,f,Z,d):&=\inf\{s:M_{\epsilon}^s(T,f,Z,d)=0\}\\
&=\sup\{s:M_{\epsilon}^s(T,f,Z,d)=\infty\}.
\end{align*}

We define  Bowen upper metric mean dimension with potential function  $f$ on the set $Z$ as 
\begin{align*}
\overline{\rm {mdim}}_M^B(T,f,Z,d)&=\limsup_{\epsilon \to 0}\frac{M_{\epsilon}(T,f,Z,d)}{\log \frac{1}{\epsilon}}.
\end{align*}
\end{df}

\begin{df}
Let  $Z\subset X$, $f\in C(X, \mathbb{R})$,   $\epsilon>0$, $N\in \mathbb{N}$ and $s \in \mathbb{R}$.  
Put
$$P_{N,\epsilon}^s(T,f,Z,d)=\sup\{\sum_{i\in I}\limits  e^{-n_i s+\logf \cdot S_{n_i}f(x_i)}\},$$
where the supremum is taken over all  finite or countable  pairwise disjoint  closed  families $\{\overline B_{n_i}(x_i,\epsilon)\}_{i\in I}$ of $Z$ with $n_i \geq N, x_i\in Z$ for all $i\in I$.

Let 
$P_{\epsilon}^s(T,f,Z,d)=\lim_{N \to \infty }\limits P_{N,\epsilon}^s(T,f,Z,d)$. We define
$$\mathcal P_{\epsilon}^s(T,f,Z,d)
=\inf\left\{\sum_{i=1}^{\infty}P_{\epsilon}^s(T,f,Z_i,d): \cup_{i\geq 1}Z_i \supseteq Z \right\}.$$
It is readily  to check that  there is  a critical value  of parameter $s$  so that $\mathcal P_{\epsilon}^s(T,f,Z,d)$  jumps  from  $\infty$ to $0$. 
The  critical value is defined by 
\begin{align*}
\mathcal P_{\epsilon}(T,f,Z,d):
&=\inf\{s: \mathcal P_{\epsilon}^s(T,f,Z,d)=0\}\\
&=\sup\{s: \mathcal P_{\epsilon}^s(T,f,Z,d)=\infty\}.
\end{align*}

We define packing upper metric mean dimension  with potential $f$ on the set $Z$ as
\begin{align*}
\overline{\rm {mdim}}_M^P(T,f,Z,d)
=\limsup_{\epsilon \to 0}\limits\frac{ \mathcal{P}_{\epsilon}(T,f,Z,d)}{\log \frac{1}{\epsilon}}.
\end{align*}
\end{df}

If $f=0$ is a zero potential, we let   $h_{\top}^B(T,Z,d,\epsilon):=M_{\epsilon}(T,0,Z,d)$ and $h_{\top}^P(T,Z,d,\epsilon):=\mathcal P_{\epsilon}(T,0,Z,d)$, and respectively define    \emph{Bowen and packing upper metric mean dimensions on the set  $Z$} as
\begin{align*}
\overline{\rm {mdim}}_M^B(T,Z,d)&=\limsup_{\epsilon \to 0}\frac{h_{\top}^B(T,Z,d,\epsilon)}{\log \frac{1}{\epsilon}},\\
\overline{\rm {mdim}}_M^P(T,Z,d)&=\limsup_{\epsilon \to 0}\frac{h_{\top}^P(T,Z,d,\epsilon)}{\log \frac{1}{\epsilon}}.
\end{align*}

\begin{rem}
In fact, the above definitions have  somewhat different from the ones given in \cite{cls21,ycz22}. The differences are alternative definition for the   quantities  $M_{N,\epsilon}^s(T,f,Z,d)$, $ P_{N,\epsilon}^s(T,f,Z,d)$. More precisely, 
\begin{align*}
\hat{M}_{N,\epsilon}^s(T,f,Z,d)&=\inf\{\sum_{i\in I}\limits  e^{-n_i s+\logf \cdot \sup_{y \in B_{n_i}(x_i,\epsilon) } S_{n_i}f(x_i) S_{n_i}f(x_i)}\},\\
\hat{P}_{N,\epsilon}^s(T,f,Z,d)&=\sup\{\sum_{i\in I}\limits  e^{-n_i s+\logf \cdot  \sup_{y \in \overline{B}_{n_i}(x_i,\epsilon) } S_{n_i}f(x_i) S_{n_i}f(x_i)}\}.
\end{align*}  
  
However, using  the following two inequalities   
\begin{align*}
|\sup_{y \in B_{n}(x,\epsilon) } S_{n}f(y)-S_{n}f(x)|&\leq n\gamma(\epsilon),\\
|\sup_{y \in \overline{B}_{n}(x,\epsilon) } S_{n}f(y)-S_{n}f(x)|&\leq n\gamma(2\epsilon),
\end{align*}
for each $n$ and all $x\in X$,
 and the fact that $\gamma(\epsilon) \to 0$ as $\epsilon$ tends to $0$, one can  show that, compared with \cite{cls21,ycz22}  the two different definitions are equivalent, which do not change the values of  $\overline{\rm {mdim}}_M^B(T,f,Z,d)$ and $\overline{\rm {mdim}}_M^P(T,f,Z,d)$.  An  apparent advantage of aforementioned definitions  is making the calculation  of some  fractal-like sets more easier, especially after we have used several specification-like properties for the orbits of phase space.
\end{rem}

Next, we  clarify the precise  relations  for  the three  different types of  metric mean dimensions with potential.

\begin{prop} \label{prop 2.6}
Let $(X,T)$ be a  TDS with a metric  $d$, $\epsilon>0$ and  $f\in C(X, \mathbb{R})$.  Let $F\in\{\overline{\rm {mdim}}_M^B, \overline{\rm mdim}_M^P\}$.   Then
\begin{enumerate}
\item If $Z_1\subset Z_2 \subset X$, then $F(T,f,Z_1,d)\leq  F(T,f,Z_2,d)$.
\item If  $Z=\cup_{1\leq j \leq N} Z_j$,  then $F(T,f,Z,d)=\max_{1 \le j \le N} \limits F(T,f,Z_j,d).$
\item If  $Z$  is  a non-empty subset of $X$, then for every $0<\epsilon<1$, 
\begin{align*}
M_{3\epsilon}(T,f,Z,d)&\leq \mathcal{P}_{\epsilon}(T,f,Z,d)+\log 3\cdot ||f||,\\
\mathcal{P}_{\epsilon}(T,f,Z,d)&\leq Q(T,f, Z,d,\epsilon),
\end{align*}
 and hence
 $$\overline{\rm {mdim}}_M^B(T,f,Z,d)\leq \overline{\rm {mdim}}_M^P(T,f,Z,d)\leq \overline{\rm mdim}_{M}(T,f,Z,d).$$
 \item  If $Z$ is a  $T$-invariant compact subset of $X$,  then  for every $0<\epsilon<1$, 
 $$P(T,f,Z,d,\epsilon)\leq   ||f||\log 2+  \gamma(\frac{\epsilon}{2})\log\frac{2}{\epsilon}+M_{\frac{\epsilon}{2}}(T,f,Z,d),$$
 and hence
$$\overline{\rm {mdim}}_M^B(T,f,Z,d)= \overline{\rm {mdim}}_M^P(T,f,Z,d)= \overline{\rm mdim}_{M}(T,f,Z,d).$$
\end{enumerate}
\end{prop}
\begin{proof}
(1) and (2)    can be  directly obtained  by the definitions.

(3)
 Fix $0<\epsilon<1$. Let $N\in \mathbb{N}$ and $A$ be a non-empty subset of $X$. Let $R$ be the largest cardinality such that the closed ball  family $$\{\overline{B}_N(x_i,\epsilon):x_i\in A, 1\leq i\leq R\}$$ are  pairwise disjoint. Then $\cup_{1\leq i\leq R} {B}_N(x_i,3\epsilon)\supset A$. Hence
\begin{align*}
M_{N,3\epsilon}^s(T,f,A,d) &\leq \sum_{i=1}^{R}\limits  e^{-n s+\log\frac{1}{3\epsilon} \cdot S_{n}f(x_i)} \\
&\leq \sum_{i=1}^{R}\limits  e^{-n (s-\log 3\cdot ||f||)+\log\frac{1}{3\epsilon} \cdot S_{n}f(x_i)} \\
&\leq P_{N,\epsilon}^{s-\log 3\cdot ||f||}(T,f,A,d)
\end{align*}
  Let $Z\subset \cup_{i\geq 1}Z_i$.  Then $$M_{3\epsilon}^s(T,f,Z,d)\leq \sum_{i\geq 1}M_{3\epsilon}^s(T,f,Z_i,d)\leq  \sum_{i\geq 1} P_{\epsilon}^{s-\log 3\cdot ||f||}(T,f,Z_i,d),$$ and hence $$M_{3\epsilon}^s(T,f,Z,d)\leq  \mathcal{P}_{\epsilon}^{s-\log 3\cdot ||f||}(T,f,Z,d).$$
This implies that $M_{3\epsilon}(T,f,Z,d)\leq \mathcal{P}_{\epsilon}(T,f,Z,d)+\log 3\cdot ||f||$.

Let $-\infty <t<s< \mathcal P_{\epsilon}(T,f,Z,d)$.  Then  there exists $N_0$ such that for every $N\geq N_0$,  we can choose a  finite or countable  pairwise disjoint  closed  family  $\{\overline B_{n_i}(x_i,\epsilon)\}_{i\in I}$   with $n_i \geq N, x_i\in Z $,  so that 
$$\sum_{i \in I} e^{-n_i s+\logf \cdot S_{n_i}f(x_i)}= \sum_{n\geq N} \sum_{i\in I_n}e^{-n s+\logf \cdot S_{n}f(x_i)}>1,$$
 where $I_n=\{i\in I: n_i=n\}$. Therefore, there exists $n\geq N$ (depending on $N$) satisfying 
 $$\sum_{i\in I_n}(1/\epsilon)^{ S_{n}f(x_i)}>(1-e^{t-s})e^{nt}.$$
 Notice that $\{x_i:i\in I_n\}$ is an  $(n,\epsilon)$-separated set of $Z$.  This shows $Q(T,f, Z,d,\epsilon)\geq t$. Letting $t\to \mathcal P_{\epsilon}(T,f,Z,d) $ gives us the desired inequality.
 
 (4)  Fix $0<\epsilon<1$. Let $\alpha>M_{\frac{\epsilon}{2}}(T,f,Z,d)$. Then $M_{\frac{\epsilon}{2}}^\alpha(T,f,Z,d)<1$. 
Since $Z$ is compact,  there exists $N_0\in \mathbb{N}$,   and a  finite open cover  $\{B_{n_i}(x_i,\frac{\epsilon}{2})\}_{i\in I}$, where  $n_i \geq N_0$, $x_i  \in  X $ for each $i\in I$, of $Z$ such that
\begin{align}
\sum_{i\in I}\limits  e^{-n_i \alpha+\log\frac{2}{\epsilon} \cdot S_{n_i}f(x_i)}<1.
\end{align}
We put $M=\max_{i\in I}n_i$ and 
\begin{align}\label{inequ 2.1.2}
K:=\sum_{k\geq 1}\sum_{i_1,...,i_k\in I}\limits  e^{-(n_{i_1}+\cdots n_{i_k}) \alpha+\log\frac{2}{\epsilon}\cdot \sum_{j=1}^kS_{n_{i_j}}f(x_{i_j})}<\infty.
\end{align}
For every $N\geq \max\{N_0,2M\}$, we define
$$\mathcal{F}_N:=\{ B_{n_{i_1}}(x_{i_1},\frac{\epsilon}{2}) \cap \cap _{k=2}^t    T^{-(n_{i_1}+\cdots n_{i_{k-1}})}B_{n_{i_k}}(x_{i_k}, \frac{\epsilon}{2}): t\geq 2,\atop N\leq \sum_{k=1}^tn_{i_k}<N+M, i_k\in I, 1\leq k\leq t\}.$$

Without loss of generality, we  may  assume that  each element of  $\mathcal{F}_N$ is non-empty. Choose $x_A\in A$ for each $A \in  \mathcal{F}_N$. Then  by the invariance of $Z$,  one has $Z\subset \cup_{A\in \mathcal{F}_N} B_N(x_A,\epsilon)$.   Then the set $E_N:=\{x_A: A \in  \mathcal{F}_N\}$  is a $(N,\epsilon)$-spanning set of $Z$.

\begin{align}\label{inequ 2.3}
&e^{-(\alpha +\gamma(\frac{\epsilon}{2})\log\frac{2}{\epsilon})N}\sum_{A\in \mathcal{F}_N}(\frac{1}{\epsilon})^{S_Nf(x_A)} \nonumber \\
\leq &2^{N||f||} e^{-(\alpha+ \gamma(\frac{\epsilon}{2})\log\frac{2}{\epsilon})N}\sum_{A\in \mathcal{F}_N}e^{\log\frac{2}{\epsilon}\cdot S_Nf(x_A)} \nonumber \\
\leq& \max\{1, e^{-(\alpha+ \gamma(\frac{\epsilon}{2})\log\frac{2}{\epsilon})M}\} 2^{N||f||}  \cdot 
\sum_{A\in \mathcal{F}_N}e^{-(\alpha+ \gamma(\frac{\epsilon}{2})\log\frac{2}{\epsilon})\sum_{k=1}^{t_A}n_{i_k}+\log\frac{2}{\epsilon}\cdot S_Nf(x_A)} \nonumber \\
\leq & C_1  \cdot 2^{N||f||} 
\sum_{A\in \mathcal{F}_N}e^{\sum_{k=1}^{t_A}-(\alpha+ \gamma(\frac{\epsilon}{2})\log\frac{2}{\epsilon}\cdot n_{i_k}+ \log\frac{2}{\epsilon}\cdot S_{n_{i_1}+\cdots+n_{i_{t_A}}} f(x_A)}, 
\end{align}
 where $C_1:= \max\{1, e^{-(\alpha+ \gamma(\frac{\epsilon}{2})\log\frac{2}{\epsilon})M}\}(\frac{2}{\epsilon})^{M||f||}.$ By the choice of $x_A$, we have
\begin{align}\label{inequ 2.4}
&e^{\sum_{k=1}^{t_A}\{-(\alpha+ \gamma(\frac{\epsilon}{2})\log\frac{2}{\epsilon})\cdot n_{i_k}+ \log\frac{2}{\epsilon}\cdot S_{n_{i_1}+\cdots+n_{i_{t_A}}} f(x_A)\}} \nonumber \\
=& e^{\sum_{k=1}^{t_A}\{-(\alpha+ \gamma(\frac{\epsilon}{2})\log\frac{2}{\epsilon})\cdot n_{i_k}+ \log\frac{2}{\epsilon}\cdot S_{n_{i_k}} f(T^{n_{i_1}+\cdots+n_{i_{k-1}}}x_A)\}} \nonumber \\
\leq &e^{\sum_{k=1}^{t_A}\{-\alpha n_{i_k}+ \log\frac{2}{\epsilon}\cdot S_{n_{i_k}} f(x_{i_k})\}} \nonumber \\
 \leq &\Pi_{1\leq k \leq t_A} e^{-\alpha n_{i_k}+ \log\frac{2}{\epsilon}\cdot S_{n_{i_k}} f(x_{i_k})}. 
\end{align}
By inequalities (\ref{inequ 2.3}) and (\ref{inequ 2.4}),  we get 
\begin{align}
e^{-(\alpha +\gamma(\frac{\epsilon}{2})\log\frac{2}{\epsilon})N}\sum_{A\in \mathcal{F}_N}(\frac{1}{\epsilon})^{S_Nf(x_A)}  
\leq  C_1  K \cdot 2^{N||f||}. 
\end{align}
This   implies that  $P_N(T,f, Z,d,\epsilon)\leq  C_1  K \cdot 2^{N||f||} e^{(\alpha +\gamma(\frac{\epsilon}{2})\log\frac{2}{\epsilon})N},$ where $C_1, K$ are both constants  that are independent of  $N$. Then 
$$P(T,f, Z,d,\epsilon)\leq ||f||\log 2+ \alpha +\gamma(\frac{\epsilon}{2})\log\frac{2}{\epsilon}$$

Letting $\alpha>M_{\frac{\epsilon}{2}}(T,f,Z,d)$ and  then by Proposition \ref{prop 2.1}, we get the desired  results.
\end{proof}
\begin{rem}
(1)In \cite{cls21}, Cheng and Li  equivalently defined the Bowen upper metric mean dimension with potential by using open covers,  and    first  proved that $\overline{\rm {mdim}}_M^B(T,f,Z,d)= \overline{\rm mdim}_{M}(T,f,Z,d)$ \cite[Proposition 2.2,(2)]{cls21}.  Here,  inspired the work of \cite{b73} we give a direct proof for the equality without using  the language of open covers.

(2)
Partial statements for properties (3), (4) mentioned in Proposition \ref{prop 2.6} were first explicitly  given in   \cite[Proposition 3.4]{ycz22}, while  the present expressions  contain some more subtle   characterizations for these different types of pressure-like quantities at the resolution $\epsilon$. 

(3) A bit more involved, which we shall  state it in appendix, is the  Bowen metric mean dimension  on   subsets which can be also defined by Bowen's approach \cite{b73}, see Proposition \ref{prop 7.1}.

\end{rem}

 \section{Upper metric mean dimensions of $\epsilon$-stable sets} 
 In this section, we  investigate relation between the upper  metric mean dimensions with potential of $\epsilon$-stable sets  and the whole phase space, and  prove Theorems \ref{thm 1.1}, \ref{thm 1.2}, \ref{thm 1.3}. 
 
 \subsection{Blocks of  the $\epsilon$-stable sets}
Inspired  the  concept of entropy point  \cite{yz07} defined by  the local viewpoint,   we  formulate  the analogous  notion  for  Bowen upper metric mean dimension with potential.

\begin{df}
Let $(X,T)$ be a TDS with a metric $d$ and $f\in C(X, \mathbb{R})$. 
Given  $x\in X$, we define the local pressure function  for Bowen upper metric mean dimension  with potential  $f$  at $x$  as 
$$\overline{\rm mdim}_M^B(T,f,x,d):=\inf_{K}\overline{\rm mdim}_M^B(T,f,K,d),$$
where the infimum  is taken over  all  closed neighborhoods  $K$ of  $x$.  
\end{df}


The following lemma  relates the   above local pressure associated with Bowen upper metric mean dimension with potential  and the metric mean dimension with potential of $X$.  This  suggests that the  Bowen upper metric mean dimension with potential  of the whole phase space  can be  computed by its  local pressure function.  Furthermore,   there exists a  non-empty closed subset $E$ of $X$ such that  if  $x \in E$, then the  Bowen upper metric mean dimension with potential  of   any closed neighborhood $K$ of $x$ carries the full  Bowen upper metric mean dimension with potential.

\begin{lem}\label{lem 3.2}
Let $(X,T)$ be a TDS with a metric $d$ and $f\in C(X, \mathbb{R})$.  Then 
$$\overline{\rm mdim}_M^B(T,f,X,d)=\max_{x\in X}\overline{\rm mdim}_M^B(T,f,x,d).$$
\end{lem}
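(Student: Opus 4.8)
The plan is to combine the monotonicity and finite stability of Bowen upper metric mean dimension with potential recorded in Proposition \ref{prop 2.4} with the compactness of $X$; no new estimate is needed.

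First I would dispose of the inequality $\max_{x\in X}\overline{\rm mdim}_M^B(T,f,x,d)\le \overline{\rm mdim}_M^B(T,f,X,d)$ (before the maximum is known to be attained, read the left side as a supremum). This is immediate from monotonicity: every closed neighborhood $K$ of a point $x$ is a subset of $X$, so Proposition \ref{prop 2.4}(1) gives $\overline{\rm mdim}_M^B(T,f,K,d)\le\overline{\rm mdim}_M^B(T,f,X,d)$, and taking the infimum over all closed neighborhoods $K$ of $x$ yields $\overline{\rm mdim}_M^B(T,f,x,d)\le\overline{\rm mdim}_M^B(T,f,X,d)$ for each $x$.

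For the reverse inequality, set $a=\sup_{x\in X}\overline{\rm mdim}_M^B(T,f,x,d)$ and fix $\eta>0$. Since $\overline{\rm mdim}_M^B(T,f,x,d)$ is by definition an infimum over closed neighborhoods of $x$, for each $x\in X$ I may pick a closed neighborhood $K_x$ of $x$ with $\overline{\rm mdim}_M^B(T,f,K_x,d)\le \overline{\rm mdim}_M^B(T,f,x,d)+\eta\le a+\eta$. The interiors $\{\Int K_x\}_{x\in X}$ form an open cover of $X$, so by compactness there are finitely many points $x_1,\dots,x_N$ with $X=\bigcup_{j=1}^N\Int K_{x_j}$, and therefore $X=\bigcup_{j=1}^N K_{x_j}$ (each $\Int K_{x_j}\subseteq K_{x_j}\subseteq X$). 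Applying the finite stability statement of Proposition \ref{prop 2.4}(2) to this finite cover gives $\overline{\rm mdim}_M^B(T,f,X,d)=\max_{1\le j\le N}\overline{\rm mdim}_M^B(T,f,K_{x_j},d)\le a+\eta$. Letting $\eta\to 0$ yields $\overline{\rm mdim}_M^B(T,f,X,d)\le a$, and together with the previous paragraph this proves the claimed equality with $\sup$ in place of $\max$.

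It remains to check that the supremum is attained, so that writing $\max$ is legitimate. Suppose, for contradiction, that $\overline{\rm mdim}_M^B(T,f,x,d)<\overline{\rm mdim}_M^B(T,f,X,d)$ for every $x\in X$. Then for each $x$ the infimum defining the local pressure function produces a closed neighborhood $K_x$ of $x$ with $\overline{\rm mdim}_M^B(T,f,K_x,d)<\overline{\rm mdim}_M^B(T,f,X,d)$; running the same compactness-and-finite-stability argument gives finitely many such neighborhoods covering $X$, whence $\overline{\rm mdim}_M^B(T,f,X,d)=\max_j\overline{\rm mdim}_M^B(T,f,K_{x_j},d)<\overline{\rm mdim}_M^B(T,f,X,d)$, a contradiction. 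Hence some $x_0\in X$ realizes the supremum. The whole argument is routine; the only point worth emphasizing is that it rests entirely on $\overline{\rm mdim}_M^B$ being \emph{finitely} stable over finite covers (Proposition \ref{prop 2.4}(2)), which is precisely what lets one glue the localized quantities back to the whole compact space $X$.
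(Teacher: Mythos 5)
Your argument is correct, but it takes a different route from the paper's. For the nontrivial inequality the paper does not cover $X$ by near-optimal neighborhoods; instead it builds a nested sequence of closed balls $B^1_{j_1}\supset B^2_{j_2}\supset\cdots$ with diameters $\frac1n$, at each stage using the finite stability of Proposition \ref{prop 2.4}(2) to select a ball carrying the full value $\overline{\rm mdim}_M^B(T,f,X,d)$, and then takes the single point $x_0\in\bigcap_n B^n_{j_n}$: every closed neighborhood of $x_0$ contains some $B^n_{j_n}$, so $x_0$ explicitly realizes the maximum. You instead choose, for each $x$, a closed neighborhood $K_x$ nearly (or, in the contradiction step, strictly) below the relevant threshold, pass to a finite subcover of the interiors by compactness, and apply finite stability to the resulting finite cover of $X$; attainment is then obtained by contradiction rather than by exhibiting the maximizing point. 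Both proofs rest on exactly the same two ingredients (monotonicity and finite stability from Proposition \ref{prop 2.4} plus compactness of $X$), so neither is more general; the paper's nested-ball scheme has the small advantage of constructing the point $x_0$ where the maximum is achieved, while your covering argument is slightly shorter and, as you note, your final contradiction paragraph together with the easy monotonicity inequality already yields the whole lemma, making the preliminary $\eta$-argument redundant.
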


\begin{proof}
It suffices to show$$\overline{\rm mdim}_M^B(T,f,X,d)\leq \max_{x\in X}\overline{\rm mdim}_M^B(T,f,x,d).$$
Let $\{B_1^1,B_2^1,...,B_{n_1}^1\}$ be a  finite   cover of $X$ consisting of the closed balls whose  diameters are not  greater than $1$. By  Proposition \ref{prop 2.6} (2),  there exists $1\leq j_1\leq n_1$ so that $$ \overline{\rm mdim}_{M}^B(T,f,X,d)=\overline{\rm mdim}_{M}^B(T,f,B_{j_1}^1,d).$$
 Consider  a    finite cover  $\{B_1^2,B_2^2,...,B_{n_2}^2\}$   of $B_{j_1}^1$ consisting of  the closed balls   $B_i^2 \subset B_{j_1}^1 $ whose  diameters  are  not  greater than $\frac{1}{2}$,  where $1\leq i\leq n_2$.  Using  Proposition \ref{prop 2.6} (2) again, there exists $1\leq j_2\leq n_2$  so  that $$ \overline{\rm mdim}_{M}^B(T,f,X,d)=\overline{\rm mdim}_{M}^B(T,f,B_{j_2}^2,d).$$
Proceeding this process, for every $n\geq 2$, we can choose a closed ball $B_{j_n}^n \subset B_{j_{n-1}}^{n-1}  $ whose  diameter is not  greater than   $\frac{1}{n}$ so that 
$$ \overline{\rm mdim}_{M}^B(T,f,X,d)=\overline{\rm mdim}_{M}^B(T,f,B_{j_n}^n,d).$$

Put  $\cap_{n\geq 1} B_{j_n}^n=\{x_0\}$.  If $K$  is  a  closed neighborhood  of $x_0$, then for sufficiently large $n$  we have $B_{j_{n}}^{n} \subset K$. Hence, one has
$$ \overline{\rm mdim}_{M}^B(T,f,X,d)=\overline{\rm mdim}_{M}^B(T,f,B_{j_n}^n,d)\leq \overline{\rm mdim}_{M}^B(T,f,K,d).$$
Since $K$ can be chosen arbitrarily, this yields 
$$ \overline{\rm mdim}_M^B(T,f,X,d)\leq \overline{\rm mdim}_M^B(T,f,x_0,d)\leq \sup_{x\in X}\overline{\rm mdim}_M^B(T,f,x,d),$$
where  $x_0$ attains  the supremum.
\end{proof}
With the help of the  Lemma \ref{lem 3.2}, we  give the proof of the Theorem \ref{thm 1.1}.

\begin{proof}[Proof of Theorem \ref{thm 1.1}]
Fix a finite sequence  $\mathcal{S}^{*}$ of $\mathbb{N}$ and $\epsilon >0$. The continuity of $T$ implies  the set   $W_{\epsilon,  \mathcal{S^{*}}}^s(x,T)$  is a  closed  neighborhood of $x$. Then,  by  the Lemma \ref{lem 3.2}, one has 
\begin{align}\label{inequ 3.3.1}
\overline{\rm mdim}_M^B(T,f,X,d)=\max_{x\in X}\overline{\rm mdim}_M^B(T,f,W_{\epsilon,   \mathcal{S^{*}}}^s(x,T),d).
\end{align}
Together with  Proposition \ref{prop 2.6}, (3) and (4), this completes the proof.
\end{proof}

\begin{rem}
 Actually, the equality (\ref{inequ 3.3.1}) can be directly obtained by    the compactness of $X$.  To be precise, noting that  $int W_{\epsilon,   \mathcal{S^{*}}}^s(x_j,T)$ is an open set,  by  the compactness of $X$ we can   choose  $ W_{\epsilon,   \mathcal{S^{*}}}^s(x_j,T)$, $j=0,...,m$, to cover $X$.  Then applying Proposition \ref{prop 2.6}, (2) gives us the desired equality.

\end{rem}

\begin{cor}
Let $(X,T)$ be a TDS with a metric $d$.  Then  for every $\epsilon >0 $ and a finite sequence  $\mathcal{S}^{*}$ of $\mathbb{N}$,
\begin{align*}
\overline{\rm mdim}_M(T,X,d)&=\max_{x\in X}\overline{\rm mdim}_M^B(T,W_{\epsilon,   \mathcal{S^{*}}}^s(x,T),d)\\
&=\max_{x\in X}\overline{\rm mdim}_M^P(T,W_{\epsilon,   \mathcal{S^{*}}}^s(x,T),d)\\
&=\max_{x\in X}\overline{\rm mdim}_M(T,W_{\epsilon,   \mathcal{S^{*}}}^s(x,T),d).
\end{align*} 
\end{cor}

 \subsection{$\epsilon$-stable sets and the preimages}

Next, we proceed to  prove Theorems \ref{thm 1.2} and \ref{thm 1.3}. The following  proof is inspired by \cite{bow72,b73,o911,wa22}. The central idea is  the construction of some proper \emph{Pigeon Cages} to
obtain  $(n,\delta)$-spanning sets of the  Bowen open ball $B_n(x,\epsilon)$.
 \begin{lem}\label{lem 3.5}
Let $(X,T)$ be a TDS with a metric $d$ and $f\in C(X, \mathbb{R})$. Fix $\epsilon >0$. For  every $0<\delta <1$ and  $\lambda> \sup_{x\in X} M_{\frac{\delta}{2}}(T,f,W_\epsilon^s(x,T),d)$, then  there exist $N$  and  a constant $M$ such that 
$$ P_n(T,f, \overline{B}_n(x,\epsilon),d,\delta)\leq M\cdot 2^{n||f||} (2/\delta)^{n\gamma(\delta)}  e^{\lambda n}$$
 for  all $n> N$ and $x\in X$.
 \end{lem}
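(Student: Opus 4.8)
The goal is to bound the spanning-set pressure $P_n(T,f,\overline B_n(x,\epsilon),d,\delta)$ uniformly in $x$ and for all large $n$, given a value $\lambda$ strictly above $\sup_{x}M_{\delta/2}(T,f,W^s_\epsilon(x,T),d)$. The core difficulty is that $\overline B_n(x,\epsilon)$ is a \emph{finite-time} Bowen ball, whereas the hypothesis controls only the \emph{infinite-time} stable sets $W^s_\epsilon(y,T)$; I must transfer a cover of each stable set to a cover — or rather a spanning set — of the finite Bowen ball, and do so with constants independent of $x$. The mechanism (the ``Pigeon cages'' alluded to in the remark before the lemma) is a compactness-plus-pigeonhole argument: cover $X$ by finitely many small sets, and for each point of $\overline B_n(x,\epsilon)$ record, at each time $j\le n$, which small set contains $T^jy$; points with the same itinerary are ``close along the orbit'' and can be spanned efficiently using the stable-set data of one representative.

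\textbf{Step 1: extract a finite cover from the stable-set hypothesis.} Fix $x\in X$ momentarily. Since $\lambda>M_{\delta/2}(T,f,W^s_\epsilon(x,T),d)$, by definition of the critical value $M^s_{\delta/2}=0$ at $s=\lambda$, so for every $N$ (take $N$ large, to be chosen) there is a countable cover $\{B_{n_i}(y_i,\delta/2)\}_{i}$ of $W^s_\epsilon(x,T)$ with $n_i\ge N$ and $\sum_i e^{-n_i\lambda+\log(2/\delta)\,S_{n_i}f(y_i)}<1$. The set $W^s_\epsilon(x,T)$ is closed, hence compact, so finitely many of these balls, say $B_{n_i}(y_i,\delta/2)$ for $i=1,\dots,k$, already cover it. Now use an openness/compactness argument: $W^s_\epsilon(x,T)=\bigcap_{N\ge 0}\overline B_{N}(x,\epsilon)$ — wait, more precisely $W^s_\epsilon(x,T)=\bigcap_N W^s_{\epsilon,[0,N]}(x,T)$, a nested intersection of compact sets — so there is $N(x)$ with $\overline B_{N(x)}(x,\epsilon)\subset \bigcup_{i\le k}B_{n_i}(y_i,\delta/2)$. (If this failed, a diagonal sequence would produce a point in $W^s_\epsilon(x,T)$ outside the open cover, a contradiction.) Thus each point of $\overline B_{N(x)}(x,\epsilon)$ lies in some $B_{n_i}(y_i,\delta/2)$, i.e.\ is $(\delta/2)$-shadowed for a block of length $n_i\ge N$ by a ``good'' center.

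\textbf{Step 2: make the data uniform in $x$.} The cover and $N(x)$ depend on $x$; I upgrade to uniformity by another compactness step. For each $x$, the inclusion $\overline B_{N(x)}(x,\epsilon)\subset\bigcup_{i\le k}B_{n_i}(y_i,\delta/2)$ is an open condition that persists on a neighborhood: there is $\rho(x)>0$ so that $\overline B_{N(x)}(x',\epsilon)$ is contained in the same union whenever $d(x,x')<\rho(x)$ (use uniform continuity of $T^0,\dots,T^{N(x)}$). Cover $X$ by finitely many such neighborhoods; let $N_0$ be the max of the corresponding $N(x)$'s and let $\Lambda$ be a single finite list pooling all the centers $(y_i,n_i)$ from all these finitely many covers — still every one of them satisfies $\sum e^{-n_i\lambda+\log(2/\delta)S_{n_i}f(y_i)}<1$ on its own cover, and there is a uniform lower bound $N\le n_i$ and a uniform upper bound $n_i\le N_1$. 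So: there are constants $N_0,N_1$ and a finite family such that for \emph{every} $x$ and every $n\ge N_0$, the ball $\overline B_{n}(x,\epsilon)$ is covered by blocks $B_{n_i}(y_i,\delta/2)$ with $N\le n_i\le N_1$ drawn from one of finitely many admissible sub-families, each of small weighted sum.

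\textbf{Step 3: the inductive/concatenation bound and pigeonholing.} Now build an $(n,\delta)$-spanning set of $\overline B_n(x,\epsilon)$ by chaining. Write $n = n_{i_1}+n_{i_2}+\cdots$ greedily: a point $y\in\overline B_n(x,\epsilon)$ lies in $B_{n_{i_1}}(y^{(1)},\delta/2)$; then $T^{n_{i_1}}y\in \overline B_{n-n_{i_1}}(T^{n_{i_1}}x,\epsilon)$ (since the orbit of $y$ stays $\epsilon$-close to that of $x$ on all of $[0,n]$), so apply Step 2 again at the point $T^{n_{i_1}}x$, and iterate until the remaining length drops below $N_0$ (handled by a crude constant factor $M$, e.g.\ $M = \max_{m<N_0}r_m(T,X,d,\delta/2)\cdot(2/\delta)^{N_0\|f\|}$ times continuity corrections). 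Choosing one $\delta/2$-representative for each admissible finite word of centers gives a $(n,\delta)$-spanning set (the $\delta/2$'s along consecutive blocks add up, but with the gap between $\delta/2$ and $\delta$ this is fine — or one works with $\delta/4$ throughout and absorbs). The weighted count $\sum_{z}(1/\delta)^{S_nf(z)}$ then factorizes along the concatenation: each block contributes a factor at most $\bigl(\sum_i (2/\delta)^{S_{n_i}f(y_i)}\bigr)$ up to the mismatch between base $1/\delta$ and $2/\delta$ (a factor $2^{n\|f\|}$ overall) and the continuity slack $S_{n_i}f(z)$ vs $S_{n_i}f(y_i)$ (a factor $(2/\delta)^{n\gamma(\delta)}$ overall). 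Since each block-sum $\sum_i e^{-n_i\lambda+\log(2/\delta)S_{n_i}f(y_i)}<1$, i.e.\ $\sum_i(2/\delta)^{S_{n_i}f(y_i)}<e^{n_i\lambda}$, multiplying over blocks of total length $\le n$ yields a clean $e^{\lambda n}$. Collecting: $P_n(T,f,\overline B_n(x,\epsilon),d,\delta)\le M\cdot 2^{n\|f\|}(2/\delta)^{n\gamma(\delta)}e^{\lambda n}$ for all $n>N:=N_0$ and all $x$, as claimed.

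\textbf{Expected main obstacle.} The delicate point is Steps 1–2: turning the \emph{pointwise}, $x$-dependent cover of $W^s_\epsilon(x,T)$ into a \emph{uniform} finite toolkit, with uniform bounds $N\le n_i\le N_1$ and a uniform bound on the number of admissible sub-families, while keeping each weighted block-sum below $1$. This is where compactness of $X$, compactness of each stable set, the nested-intersection description of $W^s_\epsilon$, and uniform continuity of finitely many iterates all have to be combined carefully; once the uniform toolkit exists, Step 3 is a bookkeeping concatenation of the kind standard in the Bowen/Misiurewicz-style subadditivity arguments. A secondary nuisance is tracking the passage between radius $\delta/2$ (where the hypothesis lives) and radius $\delta$ (where the conclusion lives) across arbitrarily many concatenated blocks; this is handled by instead running the whole argument at radius $\delta/4$ or by noting that only a bounded number of ``errors'' of size $\delta/2$ accumulate before being reset — any of these devices loses only constants of the permitted shape $M\cdot 2^{n\|f\|}(2/\delta)^{n\gamma(\delta)}$.
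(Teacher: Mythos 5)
Your proposal is correct and follows essentially the same route as the paper's proof: a finite cover of each $W_\epsilon^s(y,T)$ with weighted sum below $1$, transferred to a finite Bowen ball via the nested intersection $W_\epsilon^s(y,T)=\bigcap_n \overline{B}_n(y,\epsilon)$, made uniform over a finite cover of $X$ (your sequential-compactness persistence argument plays the role of the paper's enlargement to radius $\gamma>\epsilon$ and neighborhoods $U_y$), followed by the same greedy block concatenation with a $(N,\delta/2)$-spanning tail and a geometric-series count. The only blemishes are cosmetic (e.g.\ the inline rewriting $\sum_i(2/\delta)^{S_{n_i}f(y_i)}<e^{n_i\lambda}$ misplaces the index, and the worry about accumulating $\delta/2$-errors is a non-issue since each block is compared to its own center), and they do not affect the argument.
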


\begin{proof}
Fix $0<\delta <1$ and   $\lambda> \sup_{x\in X} M_{\frac{\delta}{2}}(T,f,W_\epsilon^s(x,T),d)$. Then for all $y\in X$  one has $M_{\frac{\delta}{2}}^\lambda(T,f,W_\epsilon^s(y,T),d)<1$.  Hence,  by the compactness of  $W_\epsilon^s(y,T)$,  there  exists
a finite open cover $\{B_{n_i(y)}(x_i(y),\frac{\delta}{2})\}_{i=1}^{k(y)}$ of $ W_\epsilon^s(y,T)$  with $n_i(y)\geq 1, x_i(y)\in X$  for all $1\leq i\leq k(y)$, such  that 
\begin{align}\label{inequ 3.2}
M(y):=\sum_{1\leq i \leq k(y)}\limits  e^{-n_i(y) \lambda+\log\frac{2}{\delta} \cdot S_{n_i(y)}f(x_i (y))}<1.
\end{align}
 
Notice that $W_\epsilon^s(y,T)=\cap_{n\geq 1}\overline{B}_n(y,\epsilon)$. Then one can choose $m(y)\in \mathbb{N}$ satisfying  $\overline{B}_{m(y)}(y,\epsilon)\subset \cup_{1\leq i\leq k(y)} B_{n_i(y)}(x_i(y),\frac{\delta}{2})$. Since $\overline{B}_{m(y)}(y,\epsilon)=\cap_{\gamma >\epsilon} \overline{B}_{m(y)}(y,\gamma)$, there exists  $\gamma >\epsilon$ such that $$\overline{B}_{m(y)}(y,\gamma)\subset \cup_{1\leq i\leq k(y)} B_{n_i(y)}(x_i(y),\frac{\delta}{2}).$$
Let $U_y:=\{z\in X: d_{m(y)}(z,y)<\gamma -\epsilon\}$ denote the open neighborhood of $y$. Then  for every $y\in X$, one has
\begin{align}\label{eq 3.2}
\cup_{z \in U_y} \overline{B}_{m(y)}(z,\epsilon)\subset \cup_{1\leq i\leq k(y)} B_{n_i(y)}(x_i(y),\frac{\delta}{2}).
\end{align}
Assume  that $\{U_{y_1},...,U_{y_p}\}$ is   a finite open cover of $X$.  For each $x\in X$, we fix $\phi(x)\in \{1,...,p\}$ such that  $x\in U_{y_{\phi(x)}}$. Define\footnote[2]{In fact, the sequence $\{a(t)\}_{t\geq 1}$   depends on  the choice of $x$ and  $n_{j_k}$ in each step. We simplify  the   notions for convenience.} 
$$a(1)=0, a(t)=a(t-1)+n_{j_{t-1}}(y_{\phi (T^{a(t-1)}x)}), t\geq 2,$$
where $1\leq j_{t-1} \leq k( y_{\phi (T^{a(t-1)}x)})$.   Let 
$$N=\max\{m(y_j),n_i(y_j):1\leq j \leq p, 1\leq i\leq k(y_j)\},$$
 and $E$ be a  $(N,\frac{\delta}{2})$-spanning set of $X$ with the smallest cardinality. For  every $n> N$ and $x\in X$, we define
 $$\mathcal{F}_n(x)=\{\mathop\cap_{l=1}^t T^{-a(l)}B_{n_{j_l} (y_{\phi (T^{a(l)}x)}) }(x_{j_l}(y_{\phi (T^{a(l)}x)}),\frac{\delta }{2}):\atop t\geq 1, a(t)+N<n\leq a(t+1)+N\}.$$
 Without loss of generality, we may assume that each element of  $\mathcal{F}_n(x)$ is not empty.
 
Fix $n> N$ and $x\in X$. Let  $y \in  \overline{B}_n(x,\epsilon)$. Then by (\ref{eq 3.2}) there exists $1\leq j_1 \leq k(y_{\phi(x)})$ such that $y\in  B_{n_{j_1} (y_{\phi (x)}) }(x_{j_1}(y_{\phi (x)}),\frac{\delta }{2})$. If $n-n_{j_1} (y_{\phi (x)})>N$, noticing that $T^{a(2)}y\in \overline{B}_{n-a(2)}(T^{a(2)}x,\epsilon)$, using (\ref{eq 3.2}) again  there exists $1\leq j_2 \leq k(y_{\phi(T^{a(2)}x)})$ such that $T^{a(2)}y\in  {B}_{n_{j_2} (y_{\phi (T^{a(2)}x)}) }(x_{j_2}(y_{\phi (T^{a(2)}x)}),\frac{\delta }{2})$; otherwise,  choose $z\in E$ satisfying  $T^{a(2)}y\in  B_N(z,\frac{\delta}{2})$. This process can be proceed  as long as $n-a(t)>N$ and be   terminated if  $n-a(t+1)\leq N$. Assuming  that after $(t+1)$-steps, the process    terminates for $y$. Then 
$$y\in \mathop\cap_{l=1}^t T^{-a(l)}B_{n_{j_l} (y_{\phi (T^{a(l)}x)}) }(x_{j_l}(y_{\phi (T^{a(l)}x)}),\frac{\delta }{2})\cap T^{-a(t+1)} B_N(z,\frac{\delta}{2}).$$
Therefore, one has
\begin{align*}
\overline{B}_n(x,\epsilon) &\subset \cup_{A\in  \mathcal{F}_n(x)} \cup_{z\in E} (A\cap  T^{-a(t+1)} B_N(z,\frac{\delta}{2}))\\
&\subset \cup_{A\in  \mathcal{F}_n(x)}\cup_{z\in E} B_{n}(y_{A,z},\delta),
\end{align*}
where  $A \in \mathcal{F}_n(x)$ and $ y_{A,z}$ is chosen from  the set $ A\cap  T^{-a(t+1)} B_N(z,\frac{\delta}{2})$.

Then  the set  $\{y_{A,z}:A\in\mathcal{F}_n(x), z\in E\}$ is a $(n,\delta)$-spanning set of $\overline{B}_n(x,\epsilon)$. Hence,
\begin{equation} \label{eq 3.3}
\begin{aligned}
&e^{-\lambda \cdot n}\sum_{y_{A,z}}(1/\delta)^{S_nf(y_{A,z})}\\
= &e^{-\lambda \cdot n}\sum_{y_{A,z}}e^{\log \frac{1}{\delta}\cdot {S_nf(y_{A,z})}}\\
\leq & 2^{n||f||}\cdot\sum_{y_{A,z}}e^{{-\lambda  \cdot n}+\log \frac{2}{\delta}\cdot{S_nf(y_{A,z})}}\\
\leq &\max\{e^{-\lambda N},1\}\cdot  2^{n||f||}\sum_{y_{A,z}}e^{{-\lambda}a(t+1)+\log \frac{2}{\delta}\cdot{S_nf(y_{A,z})}}.
\end{aligned}
\end{equation}

Notice that $a(t+1)+N\geq n>a(t+1)$. Then 
\begin{align}\label{eq 3.4}
S_nf(y_{A,z})&=S_{a(t+1)}f(y_{A,z})+S_{n-a(t+1)}f(T^{a(t+1)}y_{A,z}) \nonumber\\
&\leq  S_{a(t+1)}f(y_{A,z})+N||f||.
\end{align} 
Using  the inequality (\ref{eq 3.4}), we have 
\begin{equation} \label{eq 3.5}
\begin{aligned}
&e^{-\lambda \cdot n}\sum_{y_{A,z}}(1/\delta)^{S_nf(y_{A,z})}\\
\leq &  C\cdot 2^{n||f||}\sum_{y_{A,z}}e^{{-\lambda}a(t+1)+\log \frac{2}{\delta}\cdot{S_{a(t+1)}f(y_{A,z})}},
\end{aligned}
\end{equation}
where $C:=\max\{e^{-\lambda N},1\}(2/\delta)^{N||f||}$. By   the choice  of  $y_{A,z}$,  for every  $1\leq l\leq t$ we have 
\begin{align*}
&|S_{n_{j_l} (y_{\phi (T^{a(l)}x)})}f(x_{j_l}(y_{\phi (T^{a(l)}x)}))- S_{n_{j_l} (y_{\phi (T^{a(l)}x)})}f(T^{a(l)}y_{A,z})|\\
\leq& {n_{j_l} (y_{\phi (T^{a(l)}x)})}\gamma(\delta).
\end{align*}
This yields 
\begin{equation}\label{inequ 3.7}
\begin{aligned}
&e^{{-\lambda}a(t+1)+\log \frac{2}{\delta}\cdot{S_{a(t+1)}f(y_{A,z})}}\\
\leq& (2/\delta)^{a(t+1)\gamma(\delta)}e^{\sum_{l=1}^t\{-\lambda \cdot n_{j_l} (y_{\phi (T^{a(l)}x)})+\log \frac{2}{\delta}\cdot S_{n_{j_l} (y_{\phi (T^{a(l)}x)})}f(x_{j_l}(y_{\phi (T^{a(l)}x)}))\} }\\
\leq& (2/\delta)^{n\gamma(\delta)}\Pi _{1\leq  l \leq t}e^{\{-\lambda \cdot n_{j_l} (y_{\phi (T^{a(l)}x)})+\log \frac{2}{\delta} S_{n_{j_l} (y_{\phi (T^{a(l)}x)})}f(x_{j_l}(y_{\phi (T^{a(l)}x)}))\} }.
\end{aligned}
\end{equation}
By inequality (\ref{inequ 3.2}),  one has 
\begin{align}\label{inequ 3.8}
&\sum_{t\geq 1}\Pi_{1\leq l\leq t} e^{{-\lambda }\cdot n_{j_l} (y_{\phi (T^{a(l)}x)})+\log \frac{2}{\delta} S_{n_{j_l} (y_{\phi (T^{a(l)}x)})}f(x_{j_l}(y_{\phi (T^{a(l)}x)})) } \nonumber\\
\leq& \sum_{t\geq 1} \{\max_{1\leq l\leq p}M(y_l)\}^t<\infty.
\end{align}

Finally, by inequalities (\ref{eq 3.5}), (\ref{inequ 3.7}) and (\ref{inequ 3.8}) one  has 
\begin{equation}
\begin{aligned}
e^{-\lambda  \cdot n}\sum_{y_{A,z}}(1/\delta)^{S_nf(y_{A,z})}
\leq 2^{n||f||} (2/\delta)^{n\gamma(\delta)}M,
\end{aligned}
\end{equation}
where $M=C\#E\sum_{t\geq 1}\{\max_{1\leq l\leq p}M(y_l)\}^t<\infty.$ This  completes the proof.
\end{proof}

Using the Lemma \ref{lem 3.5}, we shall prove Theorem \ref{thm 1.2}.

\begin{proof}[Proof of Theorem \ref{thm 1.2}]
Notice that   $W_{\epsilon}^s(x,T) \subset W_{\epsilon,   \mathcal{S}}^s(x,T)$ for any finite or infinite sequence $\mathcal{S}$ of $\mathbb{N}$. It suffices  to show Theorem   \ref{thm 1.2}  is valid   for  $W_{\epsilon}^s(x,T)$.
 By Propositions  \ref{prop 2.1} and \ref{prop 2.6},  we have 
\begin{equation}
 \begin{aligned}
&\limsup_{\delta\to 0}\frac{1}{\log \frac{1}{\delta}}\sup_{x\in X}M_{\delta}(T,f,W_\epsilon^s(x,T),d)\\
\leq&
\limsup_{\delta\to 0}\frac{1}{\log \frac{1}{\delta}}\sup_{x\in X} \mathcal{P}_{\delta}(T,f,W_\epsilon^s(x,T),d)\\
\leq&
\limsup_{\delta\to 0}\frac{1}{\log \frac{1}{\delta}}\sup_{x\in X} P(T,f, W_\epsilon^s(x,T),d,\delta)\\
\leq&  \overline{\rm mdim}_M(T,f,X,d).
\end{aligned}
\end{equation}

The left for us is  to show the   following inequality:
\begin{align}\label{eq 3.9}
\overline{\rm mdim}_M(T,f,X,d)\leq  \limsup_{\delta\to 0}\frac{1}{\log \frac{1}{\delta}}\sup_{x\in X}M_{\delta}(T,f,W_\epsilon^s(x,T),d). 
\end{align}

Fix   $0<\epsilon <1$ and let  $0<\delta <1$, $\lambda> \sup_{x\in X} M_{\frac{\delta}{2}}(T,f,W_\epsilon^s(x,T),d)$. Then, by Lemma \ref{lem 3.5},  there exist an integer number $N$  and  a constant $M$ such that 
$$ P_n(T,f, \overline{B}_n(x,\epsilon),d,\delta)\leq M\cdot 2^{n||f||} (2/\delta)^{n\gamma(\delta)}  e^{\lambda n}$$
for  all $n> N$ and $x\in X$.
Let  $n> N$ and $E$ be an $(n,\epsilon)$-spanning set of  $X$ with smallest cardinality $r_n(T,X,d,\epsilon)$. Then 
\begin{align}
P_n(T,f, X,d,\delta)&\leq \sum_{x\in E} P_n(T,f, \overline{B}_n(x,\epsilon),d,\delta)\nonumber\\
& \leq r_n(T,X,d,\epsilon)\cdot  M\cdot 2^{n||f||} (2/\delta)^{n\gamma(\delta)}  e^{\lambda n}.
\end{align}
We get
 \begin{align}\label{eq 3.11}
P(T,f, X,d,\delta)\leq  r(T, X, d,\epsilon)+||f||\log 2+\gamma(\delta)\log \frac{2}{\delta}+\lambda.
\end{align}
Letting  $\lambda \to  \sup_{x\in X} M_{\frac{\delta}{2}}(T,f,W_\epsilon^s(x,T),d)$ and then
dividing  $\log \frac{1}{\delta}$ in both sides of the inequality (\ref{eq 3.11}),  we get  the desired inequality (\ref{eq 3.9}).
\end{proof}

We present the following corollary as the zero potential case, which provides the more information about the divergent rate  of topological $\epsilon$ entropy-like quantities  of $\epsilon$-stable sets.

\begin{cor}\label{coro 1.4}
Let $(X,T)$ be a TDS with a metric $d$.  Then for  any  finite or infinite sequence  $\mathcal{S}$ of $\mathbb{N}$ and    $\epsilon >0$,
\begin{align*}
\overline{\rm mdim}_M(T,X,d)&=\limsup_{\delta\to 0}\frac{1}{\log \frac{1}{\delta}}\sup_{x\in X}h_{\top}^B(T,W_{\epsilon, \mathcal{S}}^s(x,T),d,\delta)\\
&=
\limsup_{\delta\to 0}\frac{1}{\log \frac{1}{\delta}}\sup_{x\in X} h_{\top}^P(T,W_{\epsilon, \mathcal{S}}^s(x,T),d,\delta)\\
&=
\limsup_{\delta\to 0}\frac{1}{\log \frac{1}{\delta}}\sup_{x\in X} h_{\top}(T, W_{\epsilon, \mathcal{S}}^s(x,T),d,\delta).
\end{align*}   
\end{cor}

\begin{rem}
The authors   started  this  paper based on   the work \cite{bow72,ffn03,h08,mc13}.  After we finished this paper, we also notice   that  Tsukamoto \cite[Theorem 1.1]{tsu22}    established   the third equality of Corollary  \ref{coro 1.4} for the case of homeomorphism  $T$ and  the (two-sided) $\epsilon$-stable set $W_\epsilon^s(x,T)=\{y\in X:d(T^jx,T^jy)\leq\epsilon, \text{for all }j\in \mathbb{Z}\}$. 
\end{rem}

Based on  the Theorem  \ref{thm 1.2}, we get  an analogous equality concerning the divergent rate of the $\epsilon$-pressure of the dispersion of preimages of $\epsilon$-stable sets.

\begin{proof}[Proof of  Theorem \ref{thm 1.3}]
Notice that  $W_\epsilon(x,T) \subset T^{-n} W_\epsilon(T^nx,T)$ holds for  any $n$ and all $x\in X$. By Theorem \ref{thm 1.2}, one has
\begin{align*}
\overline{\rm mdim}_M(T,f,X,d)&\leq\limsup_{\delta\to 0}\frac{1}{\log \frac{1}{\delta}} GR( \sup_{x\in X}P_n(T,f, W_\epsilon^s(x,T),d,\delta)) \\
&\leq\limsup_{\delta\to 0}\frac{1}{\log \frac{1}{\delta}}GR( \sup_{x\in X}P_n(T, f,T^{-n}W_\epsilon^s(T^nx,T),d,\delta))\\
&\leq\limsup_{\delta\to 0}\frac{1}{\log \frac{1}{\delta}}GR( \sup_{y\in X}P_n(T, f,T^{-n}W_\epsilon^s(y,T),d,\delta))\\
& \leq \overline{\rm mdim}_M(T,f,X,d).
\end{align*}
\end{proof}

\begin{cor}
Let $(X,T)$ be a TDS with a metric $d$.  Then  for every $\epsilon >0 $, 
\begin{align*}
\overline{\rm mdim}_M(T,X,d)=\limsup_{\delta\to 0}\frac{1}{\log \frac{1}{\delta}} \limsup_{n\to \infty} \frac{1}{n}\sup_{x\in X}  \log r_n(T, T^{-n}W_\epsilon^s(x,T),d,\delta).
\end{align*}   
\end{cor}

\begin{ex}
Let $[0,1]$ be   the unit interval with the standard metric.  Equip  the unit cube  $[0,1]^{\mathbb{N}}$ with the  product metric  
$$d(x,y)=\sum_{n\in \mathbb{N}}2^{-n}|x_n-y_n|,$$
and   let  $\sigma:[0,1]^{\mathbb{Z}}\rightarrow [0,1]^{\mathbb{N}}$ be the left shift  given by $\sigma((x_n)_{n\in \mathbb{N}})=(x_{n+1})_{n\in \mathbb{N}}$.
It is  well-known  in \cite{lt18} that    $\overline{\rm mdim}_M(\sigma,[0,1]^{\mathbb{N}},d)=1$, and hence $\overline{\rm mdim}_M^B(\sigma,[0,1]^{\mathbb{N}},d)=1$ by Proposition \ref{prop 2.6}. 

 Let $\epsilon >0$ and consider the constant sequence  $x=(0,0,...)$ which is fixed by $\sigma$. Set $\alpha=\min\{\frac{1}{2},\epsilon\}$. Then one has $[0,\alpha]^{\mathbb{N}}\subset W_\epsilon^s(x,\sigma)$.  Let $f_\alpha: [0,1]^{\mathbb{N}} \rightarrow [0,\alpha]^{\mathbb{N}}$ be a mapping assigning each $(x_n)_{n\in \mathbb{N}}$ to $(\alpha x_n)_{n\in \mathbb{N}}$.  Notice that $d(f_\alpha(x),f_\alpha(y)=\alpha d(x,y)$. This gives a bi-Lipschitz conjugation mapping between  TDSs $([0,1]^{\mathbb{N}}, \sigma,d)$ and  $([0,\alpha]^{\mathbb{N}}, \sigma,d)$.  It is easy to  show  the Bowen metric  mean dimension is preserved under such conjugation mapping and hence $\overline{\rm mdim}_M^B(\sigma,[0,\alpha]^{\mathbb{N}},d)=1$. Therefore,
 \begin{align*}
 1\leq  \overline{\rm mdim}_M^B(\sigma,W_\epsilon^s(x,\sigma),d)&\leq  \overline{\rm mdim}_M^P(\sigma,W_\epsilon^s(x,\sigma),d)\\
 &\leq \overline{\rm mdim}_M(\sigma,W_\epsilon^s(x,\sigma),d)\leq 1.
 \end{align*}
Consequently,   we  obtain
\begin{align*}
 &\overline{\rm mdim}_M^B(\sigma,W_\epsilon^s(x,\sigma),d)=  \overline{\rm mdim}_M^P(\sigma,W_\epsilon^s(x,\sigma),d)\\
	= &\overline{\rm mdim}_M(\sigma,W_\epsilon^s(x,\sigma),d) = \overline{\rm mdim}_M(\sigma,[0,1]^{\mathbb{N}},d)=1.
\end{align*}
since  $x$ ia s fixed point, we have $W_\epsilon(x,\sigma) \subset \sigma^{-n} W_\epsilon(x,\sigma)$ for all $n$. This  by $\overline{\rm mdim}_M(\sigma,W_\epsilon^s(x,\sigma),d) =1$ implies 
\begin{align*}
1&\leq \limsup_{\delta\to 0}\frac{1}{\log \frac{1}{\delta}} \limsup_{n\to \infty} \frac{1}{n}  \log r_n(\sigma, \sigma^{-n}W_\epsilon^s(x,\sigma),d,\delta)\\
&\leq \overline{\rm mdim}_M(\sigma,[0,1]^{\mathbb{N}},d)=1.
\end{align*}

This gives us  $$\limsup_{\delta\to 0}\frac{1}{\log \frac{1}{\delta}} \limsup_{n\to \infty} \frac{1}{n}  \log r_n(\sigma, \sigma^{-n}W_\epsilon^s(x,\sigma),d,\delta)
= \overline{\rm mdim}_M(\sigma,[0,1]^{\mathbb{N}},d)=1.$$

\end{ex}

\section{Chaotic phenomenons arise in stable and unstable sets}
In this  section, we exhibit the  chaotic phenomenons of infinite entropy systems and  characterize the ``size" of  the chaotic sets  by  using metric mean dimension.  The main results are Theorems \ref{thm 1.4} and \ref{thm 1.5}.

\subsection{Chaotic phenomenons  of positive entropy systems}
For an invertible TDS $(X,T)$,  the  \emph{stable set} and \emph{unstable set} of $x$ for $T$ are respectively given by
\begin{align*}
W^s(x,T)&=\{y\in X:\lim_{n\to \infty}d(T^nx,T^ny)=0\},\\
W^u(x,T)&=\{y\in X:\lim_{n\to \infty}d(T^{-n}x,T^{-n}y)=0\}.
\end{align*}

Then  $W^s(x,T)=W^u(x,T^{-1})$ and  $W^u(x,T)=W^s(x,T^{-1})$ for all $x\in C$. The following definition dues to  Blanchard and Huang \cite{bh08}.

\begin{df}
Let $(X,T)$ be an invertible TDS and $A\in 2^X$. The set $A$ is said to be  weakly mixing  for $T$ if  there exists $B\subset A$ satisfying
\begin{enumerate}
\item $B$ is the union of countably many Cantor sets;
\item the closure of $B$ equals $A$;
\item  for any $C \subset B$ and a continuous map $g: C\rightarrow A$, there exists an increasing sequence of natural numbers  $\{n_i\}_{i\geq 1}$ such that $\lim\limits_{i \to \infty}T^{n_i}x=g(x)$ for any $x \in C$.
\end{enumerate}
\end{df}

It follows from the   Definition 4.1 that  weakly  mixing set is the \emph{non-overall property}. However, it  is consistent with the overall weakly mixing property.  To be precise,  Xiong and Yang \cite{xy91} proved that   $X$ is a weakly mixing  set if and only if  the system $(X,T)$ is weakly mixing (i.e. $(X\times X, T\times T)$ is transitive).  The set of  all weakly mixing sets for $T$ is denoted by $WM_s(X,T)$.

For  positive  systems, Huang \cite[Theorem 4.6]{h08} revealed that  there exists  closed subsets    in  the closure of  stable sets and unstable sets  such that  the closed sets are weakly mixing for  $T$  and $T^{-1}$, and the  closed sets have  positive entropies.  This can be stated as follows:

\begin{thm} 
Let $(X,T)$ be an  invertible TDS and $\mu \in E(X,T)$ with $h_{\mu}(T)>0$. Then for $\mu$-a.e. $x \in X$, there exists  closed subset $E(x) \subset \overline{W^s(x,T)}\cap \overline{W^u(x,T)}$ such that 
\begin{enumerate}
\item  $E(x)\in WM_s(X,T)  \cap WM_s(X,T^{-1})$, that is, $E(x)$ is weakly mixing for $T$, $T^{-1}$;
\item $h_{top}(T,E(x),d)\geq h_{\mu}(T)$ and  $h_{top}(T^{-1},E(x),d)\geq h_{\mu}(T)$.
\end{enumerate}
\end{thm}

We   especially are interested in  the  chaotic phenomenons  for some  extreme cases  whenever the system admits the  infinite  topological entropy.
For  \emph{Question 2},  although the  approach of  \cite{h08}  still works  for   finding the measure-theoretically ``rather big"  set such that   chaotic   phenomenon occurs,  the remaining task of   the  estimation of   the metric mean dimension  of   those  closed mixing sets is a bit  complicated and much more involved than the previous case. 

\subsection{Chaotic phenomenons  of infinite entropy systems}
We briefly recall some basic concepts, and prepare  some  critical  lemmas  to pave the way for  the proof of Theorem \ref{thm 1.4}.

\subsubsection{Metric mean dimension defined by open covers}

By $\mathcal{C}_X$,$ \mathcal{P}_X,$ $ \mathcal{C}_X^o$ we denote the sets of  finite  covers of $X$ whose union  of the Borel sets is  $X$,  finite Borel partitions of $X$,  finite open covers of $X$, respectively. Given  $\mathcal{U},$$\mathcal{V} \in \mathcal{C}_X$, we write $\mathcal{V}\succ\mathcal{U}$ to denote  $\mathcal{V}$ is \emph{finer} than $\mathcal{U}$, that is, each element of $\mathcal{V}$ is contained in some element of $\mathcal{U}$. The \emph{join} of   $\UU$ and $\mathcal{V}$ is  the cover $\UU \vee \mathcal{V}:=\{U\cap V:  U\in \UU, V\cap \mathcal{V}\}$.  Denote by $\diam \mathcal{U}=\max_{U \in \mathcal{U}}\diam U$  the \emph{diameter} of $\mathcal{U}$.    The  \emph{Lebesgue number} of $\UU$,  denoted by $\Leb(\UU)$,   is the largest positive number $\delta$ such that each open ball $B(x,\delta)$ is contained some element of $\UU$. 

Given  a non-empty set $Z\subset X$ and $\UU\in \mathcal{C}_X^o$, put 
$$N(\UU|Z)=\min\{\#\mathcal{F}: \mathcal{F}\subset \UU~\text{and}\cup_{F\in \mathcal{F}}F\supset Z\}$$
 and 
 $$h(T, \mathcal{U}|Z)=\limsup_{n \to \infty}\frac{1}{n}\log N(\UU^n|Z),$$
 where $\UU^n=\vee_{j=0}^{n-1}T^{-j}\UU$ is join of the open covers $T^{-j}\UU, j =0,...,n-1$. If $Z$ is $T$-invariant, then the limsup is exactly the limit.
 
The metric mean dimension  have been defined by spanning sets and separated sets,  while an equivalent definition for metric mean dimension using open covers is  still missing.

 \begin{lem}\label{lem 3.4}
 Let $(X,T)$ be a TDS with a metric $d$. Then for  any non-empty subset $Z\subset X$,
 \begin{align*}
 \overline{\rm {mdim}}_M(T,Z,d)&=\limsup_{\epsilon\to 0}\frac{1}{\logf}\inf_{\diam \UU \leq \epsilon}h(T, \mathcal{U}|Z)\\
 &=\limsup_{\epsilon\to 0}\frac{1}{\logf}\sup_{\diam \UU \leq \epsilon, \Leb(\UU)\geq \frac{\epsilon}{4}}h(T, \mathcal{U}|Z).
 \end{align*}

 \end{lem}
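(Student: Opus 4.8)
The plan is to sandwich all three quantities between two $\epsilon$-entropies $h_{\top}(T,Z,d,\cdot)$ evaluated at scales that differ from $\epsilon$ only by fixed multiplicative constants, and then divide by $\logf$ and let $\epsilon\to0$: the constants disappear in the quotient and a squeeze argument finishes the proof. Two elementary covering inequalities do all the work, and I will use only that $h_{\top}(T,Z,d,\epsilon)=\limsup_{n\to\infty}\tfrac1n\log r_n(T,Z,d,\epsilon)$.

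First I would show that a small-diameter open cover controls spanning numbers. Fix a finite open cover $\UU$ with $\diam\UU\le\epsilon$. Every member $A=\bigcap_{j=0}^{n-1}T^{-j}U_{i_j}$ of $\UU^n$ satisfies $d(T^jy,T^jz)\le\diam U_{i_j}\le\epsilon$ for all $y,z\in A$ and $0\le j\le n-1$, hence has $d_n$-diameter $\le\epsilon$, so $A\subset B_n(x,2\epsilon)$ for any $x\in A$. Picking one point from each member of a minimal subcover of $\UU^n$ covering $Z$ yields an $(n,2\epsilon)$-spanning set of $Z$; therefore $r_n(T,Z,d,2\epsilon)\le N(\UU^n|Z)$ for all $n$, and $h_{\top}(T,Z,d,2\epsilon)\le h(T,\UU|Z)$. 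Taking the infimum over all such $\UU$ and recalling that the supremum in the statement runs over a subfamily of these covers,
\[
h_{\top}(T,Z,d,2\epsilon)\ \le\ \inf_{\diam\UU\le\epsilon}h(T,\UU|Z)\ \le\ \sup_{\diam\UU\le\epsilon,\ \Leb(\UU)\ge\epsilon/4}h(T,\UU|Z);
\]
the index set of the supremum is non-empty, since for a finite $\tfrac{\epsilon}{4}$-net $\{x_1,\dots,x_k\}$ of the compact space $X$ the cover $\{B(x_i,\epsilon/2)\}_{i=1}^k$ has diameter $\le\epsilon$ and Lebesgue number $\ge\epsilon/4$.

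Second I would show that a cover with a large Lebesgue number is controlled by spanning numbers. Let $\UU$ satisfy $\diam\UU\le\epsilon$ and $\Leb(\UU)\ge\eta:=\epsilon/4$. Since $B_n(x,\eta)=\bigcap_{j=0}^{n-1}T^{-j}B(T^jx,\eta)$ and each factor lies in some member of $\UU$, the $d_n$-ball $B_n(x,\eta)$ lies in a member of $\UU^n$ for every $x\in X$. Hence, if $E$ is a minimal $(n,\eta)$-spanning set of $Z$, the members of $\UU^n$ containing the balls $B_n(x,\eta)$ with $x\in E$ cover $Z$, so $N(\UU^n|Z)\le\#E=r_n(T,Z,d,\eta)$ and $h(T,\UU|Z)\le h_{\top}(T,Z,d,\epsilon/4)$. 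Taking the supremum over the covers in question, $\sup_{\diam\UU\le\epsilon,\ \Leb(\UU)\ge\epsilon/4}h(T,\UU|Z)\le h_{\top}(T,Z,d,\epsilon/4)$.

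Combining the two steps yields the chain $h_{\top}(T,Z,d,2\epsilon)\le\inf_{\diam\UU\le\epsilon}h(T,\UU|Z)\le\sup_{\diam\UU\le\epsilon,\ \Leb(\UU)\ge\epsilon/4}h(T,\UU|Z)\le h_{\top}(T,Z,d,\epsilon/4)$. Dividing by $\logf$ and taking $\limsup_{\epsilon\to0}$, replacing $\epsilon$ by $2\epsilon$ or by $\epsilon/4$ changes $\logf$ only by a bounded additive constant, which vanishes after division; hence the two extreme terms both tend to $\overline{\rm {mdim}}_M(T,Z,d)$, and the two middle terms are squeezed between them, giving the asserted identities. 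The only points that need care are the uniform-in-$n$ bookkeeping of the scale factors $2$ and $4$ and the elementary observation that $\Leb(\UU)\ge\eta$ forces every $d_n$-ball of radius $\eta$ into a member of $\UU^n$; neither presents a genuine obstacle.
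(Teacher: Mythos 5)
Your proof is correct and follows essentially the same route as the paper: bound $r_n(T,Z,d,\cdot)$ by $N(\UU^n|Z)$ for small-diameter covers, bound $N(\UU^n|Z)$ by $r_n(T,Z,d,\epsilon/4)$ for covers with $\Leb(\UU)\geq\epsilon/4$ (whose existence comes from an $\frac{\epsilon}{4}$-net), and then squeeze after dividing by $\log\frac{1}{\epsilon}$. The only difference is bookkeeping of constants (you use $\diam\UU\leq\epsilon\Rightarrow r_n(T,Z,d,2\epsilon)\leq N(\UU^n|Z)$ where the paper uses $\diam\UU\leq\frac{\epsilon}{2}\Rightarrow r_n(T,Z,d,\epsilon)\leq N(\UU^n|Z)$), which is immaterial after the division.
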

\begin{proof}
Fix $\epsilon >0$. Let  $\UU \in \mathcal{C}_X^o$ with $\diam \UU \leq \frac{\epsilon}{2}$.  Then  $r_n(T,Z, d,\epsilon)\leq  N(\UU^n|Z) $, which  implies  $h_{\top}(T, Z,d,\epsilon) \leq  \inf_{\diam \UU \leq\frac{ \epsilon}{2}}h(T, \mathcal{U}|Z)$.  

Let $\UU$ be  a  finite open cover of $X$  with  $\diam \UU\leq \epsilon$ and  $\Leb(\UU)\geq \frac{\epsilon}{4}$ (Such open covers always exist, cf. \cite[Lemma 3.4]{gs20}).   Let   $F $  be an $(n,\frac{\epsilon}{4})$-spanning set of $Z$.  Then for $x\in F$, the  Bowen ball $B_n(x,\frac{\epsilon}{4})$  must be   contained in some element of $\mathcal{U}^n$. So    $r_n(T,Z, d,\frac{\epsilon}{4})\geq N(\UU^n|Z) $. It follows that $$h_{\top}(T, Z,d,\frac{\epsilon}{4}) \geq  \sup_{\diam \UU \leq \epsilon, \Leb(\UU)\geq \frac{\epsilon}{4}}  h(T, \mathcal{U}|Z)\geq  \inf_{\diam \UU \leq \epsilon}h(T, \mathcal{U}|Z).$$
\end{proof}

\subsubsection{Some tools  in local entropy theory}We  invokes    some tools  in local entropy theory: measure-theoretic entropy of a fixed open cover, Shapira's  entropy and   Katok's $\epsilon$-entropy.

Let $M(X), M(X,T), E(X,T)$ denote the sets of Borel probability measures on $X$,  $T$-invariant Borel probability measures on $X$,  $T$-invariant ergodic Borel probability measures on $X$, respectively.   Given $ \mu \in M(X)$, let  $\alpha \in \mathcal{P}_X$, $\UU \in \mathcal{C}_X$,  and  let $\mathcal{C}$ be the sub-$\sigma$-algebra of $ B_\mu$, where  $B_\mu$ is the completion of  Borel $\sigma$-algebra $B_X$ under $\mu$.
We put 
$$H_{\mu}(\alpha |\mathcal{C})=\sum_{A\in \alpha} \int - \mathbb{E}(\chi_A|\mathcal{C})\log \mathbb{E}(\chi_A|\mathcal{C})d\mu.$$
where  $\mathbb{E}(\chi_A|\mathcal{C})$ is the conditional expectation of $\chi_A$ w.r.t. $\mathcal{C}$,
and  set
 $H_{\mu}(\UU |\mathcal{C})=\inf_{\alpha \succ \UU, \alpha \in \mathcal{P}_X}H_{\mu}(\alpha |\mathcal{C})$.  If  $\mathcal{C}$ is a $T$-invariant sub-$\sigma$-algebra (i.e. $T^{-1}\mathcal{C}=\mathcal{C}$) and $\mu \in M(X,T)$, then  it is readily to check  $\{H_{\mu}(\UU^n |\mathcal{C})\}_{n\geq 1}$ is    a  non-negative   sub-additive sequence in $n$. Thus, the following  limit exists:
$$h_{\mu}(T, \UU |\mathcal{C})=\lim\limits_{n\to \infty}\frac{1}{n}H_{\mu}(\UU^n |\mathcal{C}).$$

In particular, when  $\mathcal{C}=\{\emptyset, X\}(\text{mod }\mu)$, we write $H_{\mu}(\mathcal{U}):=H_{\mu}(\mathcal{U}|\mathcal{C})$ and  $h_{\mu}(T, \UU):=h_{\mu}(T, \UU |\mathcal{C})$. Then for any $\mu \in M(X,T)$, the  \emph{measure-theoretic entropy of $\mu$} is \cite[Lemma 2.7]{h08} given  by $$h_{\mu}(T)=\sup_{\alpha \in \mathcal{P}_X}h_{\mu}(T,\alpha)=\sup_{\mathcal{U} \in \mathcal{C}_X^o}h_{\mu}(T, \UU).$$

Let  $\mu \in M(X)$, $\delta \in (0,1)$, and  $\mathcal{U}$ be a  finite open cover   of $X$. We define  $N_{\mu}(\mathcal{U},\delta)$ as the minimal cardinality  of the elements of $\mathcal{U}$  whose union has $\mu$-measure  at least  $1-\delta$. 
Define 
$$h_\mu^S(\mathcal{U},\delta):=\limsup_{n\to \infty}\frac{\log N_{\mu}(\mathcal{U}^n,\delta)}{n}.$$
Specially, if  $\mu \in E(X,T)$, Shapira  \cite[Theorem 4.2]{s07}   showed that     the limit exists,  and  is independent of  choice of $\delta \in (0,1)$, which we denote by  $h_\mu^S(\mathcal{U})$  the limit.

Let $\mu \in  {M}(X)$, $\epsilon>0$, $n \in\N$ and $\delta \in (0,1)$.
Put
$$R_\mu^\delta(T,n, \epsilon):=\min\{\#E: E\subset X  ~\text{and} ~\mu (\cup_{x\in E}B_n(x,\epsilon))\geq 1-\delta \}.$$
Following the idea in  \cite{k80}, we define the \emph{upper  Katok's $\epsilon$-entropies of $\mu$} as
\begin{align*}
&\overline{h}_{\mu}^K(T,\epsilon, \delta)=\limsup_{n\to \infty} \frac{1}{n} \log R_\mu^\delta(T,n, \epsilon),\\
&\overline{h}_{\mu}^K(T,\epsilon)=\lim\limits_{\delta \to 0}\limsup_{n\to \infty} \frac{1}{n} \log R_\mu^\delta(T,n, \epsilon).
\end{align*}

The following Lemma collects some standard facts to clarify the relations of the three types of measure-theoretic entropy-like quantities.
\begin{lem}\label{lem 3.5}
Let $(X,T)$ be an invertible TDS, $\mu \in M(X,T)$  and $\UU \in\mathcal{C}_X^o$ be a finite open cover of $X$. Then the following statements hold:
\begin{enumerate}
\item  If $\mu \in E(X,T)$,  then $ h_\mu^S(\mathcal{U})=h_{\mu}(T, \UU)$;
\item  If $\mu \in E(X,T)$,  then for  every $\epsilon>0$,
$$\overline{h}_\mu^{K}(T,2\epsilon) \leq \sup_{\diam \UU \leq \epsilon, \Leb(\UU)\geq \frac{\epsilon}{4}}h_\mu^S(\mathcal{U})
\leq \overline{h}_\mu^{K}(T,\frac{\epsilon}{4});$$
\item $h_{\mu}(T, \UU)=h_{\mu}(T, \UU|P_{\mu}(T))$, where $P_{\mu}(T)$ is the Pinsker $\sigma$-algebra of $(X,B_\mu,\mu,T)$;
\item  If $\mu=\int \mu_xd\mu$  is the  disintegration of  $\mu$ over sub-$\sigma$-algebra  $\mathcal{C}$ of $\mathcal{B}_{\mu}$, then 
$$H_{\mu}(\UU |\mathcal{C})=\int H_{\mu_x}(\UU) d\mu$$
\end{enumerate}
\end{lem}
\begin{proof} 
(1) follows from \cite[Theorem 4.4 and Corollary 5.2]{s07}; Similar to the proof of the Lemma \ref{lem 3.4}, (2) can be   directly proved by   comparing the definitions and  considering    a  finite open cover $\UU$   of $X$  with  $\diam \UU\leq \epsilon(<2\epsilon)$ and  $\Leb(\UU)\geq \frac{\epsilon}{4}$; (3) and (4)    hold by \cite[Lemma 2.1]{h08} and \cite[Lemma 2.2]{h08}, respectively.
\end{proof}

\subsubsection{Metric mean dimensions of  weakly mixing sets}
We first  give an invertible version of Theorem \ref{thm 1.4}.

\begin{thm} \label{thm 4.5}
Let $(X,T)$ be an  invertible TDS with a metric $d$ and $\mu$ be an ergodic   Borel probability measure with $\overline{\rm rdim}_{L^{\infty}}(T,X,d,\mu)>0$(or $h_{\mu}(T)>0$). Then for $\mu$-a.e. $x \in X$,  there exists  closed subset $E(x) \subset \overline{W^s(x,T)}\cap \overline{W^u(x,T)}$ such that
\begin{enumerate}
\item  $E(x)\in WM_s(X,T)  \cap WM_s(X,T^{-1})$, that is, $E(x)$ is weakly mixing for $T$, $T^{-1}$;
\item $\overline{\rm {mdim}}_M(T,E(x),d)\geq \overline{\rm rdim}_{L^{\infty}}(T,X,d,\mu).$
\end{enumerate}
\end{thm}

\begin{proof}Let   $(X,B_{\mu},\mu,T)$ be  a Lebesgue system, where $B_{\mu}$ is the completion of $\mu$. By $P_{\mu}(T)$  we denote the Pinsker $\sigma$-algebra of  the measure-preserving system $(X,B_\mu,\mu,T)$. Let
$\mu=\int \mu_xd\mu$ be the  disintegration $\mu$ over $P_{\mu}(T)$.   Since $\overline{\rm rdim}_{L^{\infty}}(T,X,d,\mu)=\limsup_{\epsilon \to 0}\frac{R_{\mu,L^{\infty}}(T,\epsilon)}{\logf}>0$ and $h_{\mu}(T)=\lim\limits_{\epsilon \to0} R_{\mu,L^{\infty}}(T,\epsilon)$  \cite[Corollary 4.2]{ycz23b},  we have $h_{\mu}(T)=\infty$.  Then following the  Steps 1 and 2  given in \cite[Theorem 4.6]{h08},  whose proof is valid for any positive entropy system with $h_{\mu}(T)>0$,  one has for $\mu$-a.e. $x \in X$, $$\text{supp}(\mu_x)\subset \overline{W^s(x,T)}\cap \overline{W^u(x,T)} $$ and  supp$(\mu_x)\in WM_s(X,T)  \cap WM_s(X,T^{-1})$, where supp$(\mu_x)$ denotes the  support of  the  disintegration  measure $\mu_x$ associated with $\mu$.  

By \cite[Corollary 5.24]{ew13},  one has $T_{*}\mu_x=\mu_{Tx}$ for $\mu$-a.e. $x\in X$  since the  Pinsker $\sigma$-algebra  $P_{\mu}(T)$ is $T$-invariant.   Let $X_0\subset X$ be  a $T$-invariant  set   with $\mu$-full measure  such that $T_{*}\mu_x=\mu_{Tx}$ for any $x\in X_0$. 
We show   that for each $\UU \in \mathcal{C}_X^o$, the function $$x\in X_0 \mapsto H_{\mu_x}(\UU)= \inf_{\alpha \succ \UU, \alpha \in \mathcal{P}_X}H_{\mu_x}(\alpha)$$ is Borel  measurable. Assume that $\UU=\{U_1,...,U_m\}$.   For every  $s=(s_1,...,s_m)\in \{0,1\}^m$, we define the set
$U_s=\cap_{j=1}^mU_j(s_j),$
where $U_j(0)=U_j$ and $U_j(1)=X\backslash U_j$.  Then $\alpha=\{U_s:s\in \{0,1\}^m\} \in \mathcal{P}_X$.   It is  well-known that \cite[Proposition 6]{rom03}  $H_{\mu_x}(\UU)=\min_{\beta \in P(\mathcal{U})}\limits H_{\mu_x}(\beta),$
 where  $P(\UU)=\{\beta \in \mathcal{P}_X:\alpha \succ\beta\succ\UU\}$ is a finite family of partitions of $X$.  Fix   $\beta \in P(\mathcal{U})$. By   the characterization of condition measures \cite[Theorem 5.14]{ew13}, one has for $x\in X_0$,
\begin{align}
H_{\mu_x}(\beta)&=\sum_{B\in\beta}-\mu_x(B)\log\mu_x(B)\\
&=\sum_{B\in\beta}- \mathbb{E}(\chi_B|P_{\mu}(T))(x)\log  \mathbb{E}(\chi_B|P_{\mu}(T))(x)\nonumber.
\end{align}
We get   that  the measurability of $H_{\mu_x}(\UU)$   since  $\mathbb{E}(\chi_B|P_{\mu}(T))(x)$ is measurable. Therefore,      $\{f_n(x)\}_{n\geq 1}$, defined by   $ f_n(x)=H_{\mu_x}(\UU^n)$ for any $x\in X_0$,  is  a sequence of measurable functions. For each $x\in X_0$,   one  has
 $$f_{n+m}(x)\leq H_{\mu_x}(\UU^n)+H_{\mu_x}(T^{-n}\UU^m)=f_{n}(x)+f_{m}(T^nx).$$  Then  Kingman's sub-additive ergodic theorem ensures that $\lim_{n\to \infty}\frac{1}{n}f_n(x)$ for  $\mu$-a.e. $x\in X$, which  we say $a_{\UU}$ for the limits. By Lemma \ref{lem 3.5},  we get 
\begin{align}
h_\mu^S(\mathcal{U})=h_{\mu}(T, \UU)
&=h_{\mu}(T, \UU|P_{\mu}(T))\\
&=\lim_{n\to \infty}\frac{1}{n}H_{\mu}(\UU^n|P_{\mu}(T))\nonumber\\
&=\lim_{n\to \infty}\int \frac{1}{n}H_{\mu_x}(\UU^n)d\mu\nonumber\\
&=\int  \lim_{n\to \infty}\frac{1}{n}H_{\mu_x}(\UU^n)d\mu:=a_{\UU}\nonumber.
\end{align}
Since   
 $f_n(x)=H_{\mu_x}(\UU^n)\leq \log N(\UU^n|\text{supp}(\mu_x))$ for all $x\in X_0$ and $n\geq 1$, this implies 
\begin{align}\label{equ 3.14}
 a_{\UU}=h_{\mu}(T, \UU) \leq h(T,\UU|\text{supp}(\mu_x))
\end{align}
for  $\mu$-a.e. $x\in X$.
Notice that
\begin{align}\label{inequ 4.4}
\overline{\rm rdim}_{L^{\infty}}(T,X,d,\mu)&=\limsup_{\epsilon \to 0}\frac{1}{\logf}\inf_{\diam \UU \leq \epsilon}h_\mu^S(\mathcal{U}),\text{by \cite[Theorem 1.1]{ycz23b}}~\\
&=  \limsup_{\epsilon \to 0}\frac{1}{\logf}\inf_{\diam \UU \leq \epsilon}h_{\mu}(T,\UU), \text{by Lemma  \ref{lem 3.5}},(1)\nonumber\\
&\leq  \limsup_{\epsilon \to 0}\frac{1}{\logf}\inf_{\diam \UU \leq \epsilon, \Leb(\UU)\geq \frac{\epsilon}{4}}h_{\mu}(T,\UU):=b\nonumber.
\end{align}

Choose a sequence $\epsilon_k \to 0$ as $k \to \infty$,  and    a family of  finite open covers $\{\UU_{k}\}_{k\geq 1}$ of $X$  with $\diam \UU_k \leq \epsilon_k$ and $ \Leb(\UU_k)\geq \frac{\epsilon_k}{4}$,  such  that
\begin{align}\label{inequ 4.5}
b&= \lim_{k \to \infty}\frac{1}{\log\frac{1}{\epsilon_k}}\inf_{\diam \UU \leq \epsilon_k, \Leb(\UU)\geq \frac{\epsilon_k}{4}}\limits h_{\mu}(T,\UU)\\
&=\lim_{k \to \infty}\frac{1}{\log\frac{1}{\epsilon_k}} h_{\mu}(T,\UU_k)\nonumber.
\end{align}
It follows from inequality (\ref{equ 3.14}) that there exists a  $\mu$-full measure set $X_1$ such that for  each  $x\in X_1$, one has  
\begin{align}\label{inequ 4.6}
h_{\mu}(T, \UU_k) \leq h(T,\UU_k|\text{supp}(\mu_x))
\end{align}
 for every $k\geq 1$. Fix $x\in X_1$ and then choose a sub-sequence $\{\epsilon_{k_j}\}_{j\geq 1}$ (depending on $x$)  of $\{\epsilon_k\}_{k\geq 1}$ satisfying 
\begin{align}\label{inequ 4.7}
c_x:=\limsup_{k\to \infty}\frac{1}{\log\frac{1}{\epsilon_k}}h(T,\UU_k|\text{supp}(\mu_x))
=\lim_{j\to \infty}\frac{1}{\log\frac{1}{\epsilon_{k_j}}}h(T,\UU_{k_j}|\text{supp}(\mu_x)).
\end{align}
So for  $\mu$-a.e. $x\in X$, we have 
\begin{align*}
\overline{\rm rdim}_{L^{\infty}}(T,X,d,\mu)&\leq b \text{~by ~(\ref{inequ 4.4})}\\
&\leq c_x \text{~by ~(\ref{inequ 4.5}), (\ref{inequ 4.6}), (\ref{inequ 4.7})}\\
&\leq\limsup_{k\to \infty}\frac{1}{\log\frac{1}{\epsilon_k}}\sup_{\diam \UU_k \leq \epsilon_k, \Leb(\UU)\geq \frac{\epsilon_k}{4}}h(T,\UU_k|\text{supp}(\mu_x))\\
&\leq \overline{\rm mdim}_M(T,\text{supp}(\mu_x),d),\text{by Lemma \ref{lem 3.4}}.
\end{align*}
\end{proof}

Inspired  by the notion of maximal entropy measure given in \cite{w82}, to describe the divergent rates of  $\epsilon$-entropy(entropy at the resolution $\epsilon$)  in both topological and measure-theoretical situation, the  corresponding concept of maximal metric mean dimension measure is proposed in \cite{ycz23b} for metric mean dimension. A measure $\mu \in M(X,T)$ is said to be  \emph{a maximal metric mean dimension measure}  for Lindenstrauss-Tsukamoto's  variational principle \cite{lt18}, if
$$\over=\overline{\rm rdim}_{L^{\infty}}(T,X,d,\mu).$$

As  an interesting  corollary of Theorem \ref{thm 4.5},  for some nice  metrics the  weakly mixing sets can carry the full metric mean dimension.
\begin{cor}\label{cor 3.7}
Let $(X,T)$ be an  invertible TDS with a metric $d$ and $\mu$ be an ergodic  ergodic maximal metric mean dimension measure with $\overline{\rm rdim}_{L^{\infty}}(T,X,d,\mu)>0$. Then for $\mu$-a.e. $x \in X$,  there exists  closed subset $E(x) \subset \overline{W^s(x,T)}\cap \overline{W^u(x,T)}$ such that
\begin{enumerate}
\item  $E(x)\in WM_s(X,T)  \cap WM_s(X,T^{-1})$, that is, $E(x)$ is weakly mixing for $T$, $T^{-1}.$
\item $\overline{\rm {mdim}}_M(T,E(x),d)=\overline{\rm {mdim}}_M(T,X,d).$
\end{enumerate}
\end{cor}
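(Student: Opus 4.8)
The plan is to obtain this as an immediate consequence of Theorem \ref{thm 1.6} combined with the elementary monotonicity of $\overline{\rm mdim}_M(T,\cdot,d)$ under set inclusion. First I would apply Theorem \ref{thm 1.6} to the given ergodic measure $\mu$; this is legitimate because $h_\mu(T)=\infty$ by hypothesis. It yields, for $\mu$-a.e. $x\in X$, a closed set $E(x)\subset\overline{W^s(X,T)}\cap\overline{W^u(x,T)}$ with $E(x)\in WM_s(X,T)\cap WM_s(X,T^{-1})$ and $\overline{\rm mdim}_M(T,E(x),d)\geq\overline{\rm rdim}_{L^{\infty}}(X,T,d,\mu)$. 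Conclusion $(1)$ of the corollary is then precisely conclusion $(1)$ of Theorem \ref{thm 1.6}, so the only remaining task is to upgrade the inequality in conclusion $(2)$ of the theorem to the claimed equality.

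For the lower estimate I would substitute the standing hypothesis $\overline{\rm rdim}_{L^{\infty}}(X,T,d,\mu)=\overline{\rm mdim}_M(T,X,d)$ into the inequality furnished by Theorem \ref{thm 1.6}, which gives $\overline{\rm mdim}_M(T,E(x),d)\geq\overline{\rm mdim}_M(T,X,d)$. For the reverse estimate I would observe that $E(x)$ is a subset of $X$, so every $(n,\delta)$-spanning set of $X$ is in particular an $(n,\delta)$-spanning set of $E(x)$; hence $P_n(T,0,E(x),d,\delta)=r_n(T,E(x),d,\delta)\leq r_n(T,X,d,\delta)=P_n(T,0,X,d,\delta)$ for every $n\in\mathbb{N}$ and every $0<\delta<1$. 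Taking $\limsup_{n\to\infty}\tfrac1n\log(\cdot)$ gives $h_{\top}(T,E(x),d,\delta)\leq h_{\top}(T,X,d,\delta)$, and dividing by $\log\tfrac1\delta$ and letting $\delta\to0$ yields $\overline{\rm mdim}_M(T,E(x),d)\leq\overline{\rm mdim}_M(T,X,d)$. Combining the two estimates gives $\overline{\rm mdim}_M(T,E(x),d)=\overline{\rm mdim}_M(T,X,d)$, which is conclusion $(2)$.

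I do not expect any genuine obstacle here: all of the real work is contained in Theorem \ref{thm 1.6}, whose first half is exactly \cite[Theorem 4.6]{h08} and whose dimension-counting half is proved just above. The only point worth flagging is that the corollary is not vacuous --- the hypothesis $\overline{\rm rdim}_{L^{\infty}}(X,T,d,\mu)=\overline{\rm mdim}_M(T,X,d)$ is satisfied, for instance, whenever the double variational principle for the upper metric mean dimension is realized by an ergodic measure with infinite measure-theoretic entropy --- and I would add a short remark to that effect after the proof.
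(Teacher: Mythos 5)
Your proposal is correct and matches the paper's intent: the corollary is stated as an immediate consequence of Theorem \ref{thm 1.6}, obtained exactly as you do by substituting the hypothesis $\overline{\rm rdim}_{L^{\infty}}(X,T,d,\mu)=\overline{\rm mdim}_M(T,X,d)$ into conclusion (2) of the theorem and closing the gap with the trivial monotonicity bound $\overline{\rm mdim}_M(T,E(x),d)\leq\overline{\rm mdim}_M(T,X,d)$. No further comment is needed.
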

Let us apply  the Corollary  \ref{cor 3.7} to determine  the metric mean dimension of weakly mixing set of unit cube.  
\begin{ex}
Let $[0,1]$ be   the unit interval with the standard metric, and  let  $\mathcal{L}$ be the Lebesgue measure  on $[0,1]$.  Equip  the unit cube  $[0,1]^{\mathbb{Z}}$ with the  product metric  
 $$d(x,y)=\sum_{n\in \mathbb{Z}}2^{-|n|}|x_n-y_n|,$$
 and  with product  measure $\mu=\mathcal{L}^{\otimes \mathbb{Z}},$  respectively. 
Let $\sigma:[0,1]^{\mathbb{Z}}\rightarrow [0,1]^{\mathbb{Z}}$ be the left shift  given by $\sigma((x_n)_{n\in \mathbb{Z}})=(x_{n+1})_{n\in \mathbb{Z}}$. 

It is  well-known  in \cite{lt18} that    $$\overline{\rm mdim}_M(\sigma,[0,1]^{\mathbb{Z}},d)=\overline{\rm rdim}_{L^{\infty}}(\sigma,[0,1]^{\mathbb{Z}},d,\mu)=1.$$
 Then   the  Corollary \ref{cor 3.7} tells us that $\overline{\rm {mdim}}_M(T,E(x),d)=1$  for   $\mu$-a.e. $x\in X$.
\end{ex}

We  deal with the   non-invertible case of  Theorem \ref{thm 4.5} by nature extension of metric mean dimension.

\subsubsection{Natural extension  of metric mean dimension} 
For  surjective non-invertible dynamical systems, the  natural extension  of metric mean dimension is given  for the whole phase space $X$ \cite[Lemma 3.8]{bs24}.  For any    non-invertible dynamical system,  we develop a general form of   natural extension of  metric mean dimension  on subsets and  measure-theoretic metric mean dimension of probability measures.

Let $(X,T)$  be  a   non-invertible   dynamical system  with a metric $d$.   Endow the set
$$\tilde{X}=\{(x_n)_{n\in \mathbb{Z}}:Tx_n=x_{n+1},n \in \mathbb{Z}\}$$
 with  the  subspace topology of the product  topology of $\Pi_{j\in \mathbb{Z}}X$, which is metrizable  by the metric:
 $$\tilde{d}(\tilde{x},\tilde{y})=\sum_{n\in \mathbb{Z}}\frac{d(x_n,y_n)}{2^{|n|}},$$
 where $\tilde{x}= (x_n)_{n\in \mathbb{Z}}$ and $\tilde{y}=(y_n)_{n\in \mathbb{Z}}$.

The \emph{natural extension} of $(X,T)$ is    the inverse limit  system $(\tilde{X},\Tilde{T})$, where $\Tilde{T}$ is the (left) shift map  on $\Tilde{X}$ given by  $\Tilde{T}((x_n)_{n\in\mathbb{Z}})=(x_{n+1})_{n\in\mathbb{Z}}$. This gives us an invertible dynamical system $(\tilde{X},\Tilde{T})$. Let $\pi: \tilde{X} \rightarrow X$ be the projection to the $0^{th}$ coordinate\footnote[3]{It is easy to see that $\pi$ is continuous, but may be not surjective; in this case, $\pi(\tilde{X})$ should be $\cap_{j=1}^{\infty}T^{-j}X$. When $T$ is surjective, so is $\pi$.}. It is well-known that  the natural extension   preserves the entropies of the systems in sense of 
$h_{top}(\tilde{T},\tilde{X})=h_{top}(T,X)$
and $h_{\mu}(\tilde{T})=h_{\pi_{*}\mu}(T)$ for each $\mu \in M(\tilde{X},\tilde{T})$. 

We can strength the result  and further reveal  their divergent rates of $\epsilon$-entropies are equal as  $\epsilon $ tends $0$ within  the framework of the infinite entropy systems.

\begin{lem}\label{lem 4.8}
Let  $(X,T)$ be a  non-invertible TDS and $(\tilde{X},\Tilde{T})$  be its nature extension system.
Then for any non-empty  subset  $E$ of $\tilde{X}$ and  $\mu \in M(X)$,
\begin{align*}
{\rm {\overline{mdim}}}_M(\tilde{T},E,\tilde{d})&= {\rm {\overline{mdim}}}_M(T,\pi(E),d),\\
\limsup_{\epsilon \to 0}\frac{\overline{h}_{\mu}^K(\tilde{T},\epsilon, \delta)}{\logf}&=\limsup_{\epsilon \to 0}\frac{\overline{h}_{\pi_{*}\mu}^K(T,\epsilon, \delta)}{\logf},\\
\limsup_{\epsilon \to 0}\frac{\overline{h}_{\mu}^K(\tilde{T},\epsilon)}{\logf}&=\limsup_{\epsilon \to 0}\frac{\overline{h}_{\pi_{*}\mu}^K(T,\epsilon)}{\logf}.
\end{align*} 
Specially, if $T$ is surjective, then ${\rm {\overline{mdim}}}_M(\tilde{T},\tilde{X},\tilde{d})= {\rm {\overline{mdim}}}_M(T,X,d)$;if $\mu \in E(\tilde{X},\tilde{T})$, then $\overline{\rm rdim}_{L^{\infty}}(\tilde{T},\tilde{X},\tilde{d},\mu)=\overline{\rm rdim}_{L^{\infty}}(T,X,d,\pi_{*}\mu)$.

\end{lem}

\begin{proof}
For any  $\tilde{x} \in \tilde{X}$, by $x_0$ we refer to  the image $\pi(\tilde{x})$ of $x$ under $\pi$. Notice that  $\tilde{d}(\tilde{x},\tilde{y})\geq d(x_0,y_0)$  for
 any $\tilde{x}, \tilde{y} \in \tilde{X}$. This  implies that $\tilde{d}_n(\tilde{x},\tilde{y})\geq d_n(x_0,y_0)$ for all $n\geq 1$.  Fix $\epsilon>0$. If $F$ is an  $(n,\epsilon)$-spanning set of $E$, then $\pi(E)$ is also an   $(n,\epsilon)$-spanning set of $\pi(E)$.  We have
 \begin{align}\label{equ 4.9}
r_n(T,\pi(E),n,\epsilon)\leq r_n(\tilde{T},E,n,\epsilon).
 \end{align}
So ${\rm {\overline{mdim}}}_M(\tilde{T},E,\tilde{d})\geq  {\rm {\overline{mdim}}}_M(T,\pi(E),d)$.

Fix $x\in X$ and choose $N_0$ such that  $\sum_{|j|>N_0} \frac{\diam(X)}{2^{|n|}}<\epsilon$. Let  $\UU$  be a finite open cover of $X$ with $\diam(\UU)<\epsilon$. By  $\mathcal{W}_{2N_0}(\UU)$ we denote the set  of all length $2N_0$ of elements of $\UU$ with the form $\underline{U}=U_{i_1}\cdots U_{i_{2N_0}}$. Define
$$\tilde{X}(\underline{U}):=\{(x_j)_{j\in \mathbb{Z}} \in \tilde{X}: x_j\in U_{i_j}, j\in [-N_0,-1]\cup[n,n+N_0-1]\}.$$
If $x\in X$ and  $\tilde{X}(\underline{U}) \cap \pi^{-1}B_n(x,\epsilon)\not=\emptyset$, we can choose
$\tilde{y}_{\overline{U}} \in  \tilde{X}(\underline{U}) \cap \pi^{-1}B_n(x,\epsilon)$ such that
$$\tilde{X}(\underline{U}) \cap \pi^{-1}B_n(x,\epsilon)\subset B_n(\tilde{y}_{\overline{U}},5\epsilon).$$

Indeed, if $\tilde{z}\in \tilde{X}(\underline{U}) \cap \pi^{-1}B_n(x,\epsilon)$ then  $d(z_j, (\tilde{y}_{\overline{U}})_j)<2\epsilon$ for all $j\in [-N_0,n+N_0-1]$. Therefore,   for every $0\leq k \leq n-1$,
\begin{align*}
\tilde{d}({\tilde{T}}^{k}\tilde{z}, {\tilde{T}}^{k} \tilde{y}_{\overline{U}})&\leq \sum_{j=-N_0}^{N_0}\frac{d(({\tilde{T}}^{k}\tilde{z})_j, ({\tilde{T}}^{k} \tilde{y}_{\overline{U}})_j)}{2^{|j|}}+\epsilon\\
&< 2\cdot 2\epsilon +\epsilon=5\epsilon.
\end{align*}
This shows 
$$\pi^{-1}B_n(x,\epsilon)\subset \bigcup_{\underline{U}\in \mathcal{W}_{2N_0}(\UU),\atop  \tilde{X}(\underline{U}) \cap \pi^{-1}B_n(x,\epsilon)\not=\emptyset}B_n(\tilde{y}_{\overline{U}},5\epsilon).$$
Hence, if $F\subset X$ is an $(n,\epsilon)$-spanning set of $\pi(E)$, then
\begin{align*}
E\subset\cup_{x\in F}\pi^{-1}B_n(x,\epsilon)\subset \cup_{x\in F}\cup_{\tilde{y}\in F_x}B_n(\tilde{y},5\epsilon).
\end{align*} 
This implies that
\begin{align}\label{equ 4.8}
r_n(\tilde{T},E,n,5\epsilon)\leq r_n({T},\pi(E),n,\epsilon)\cdot (\#\mathcal{U})^{2N_0},
\end{align}
where $N_0$ is a constant that  only depends on $\epsilon$. Then we get the converse inequality  ${\rm {\overline{mdim}}}_M(\tilde{T},E,\tilde{d})\leq {\rm {\overline{mdim}}}_M(T,\pi(E),d)$. The remaining two equalities, involving Katok $\epsilon$-entropies, can be similarly proved by slightly modifying the proof.

If $\mu \in E(\tilde{X},\tilde{T})$, we have   
\begin{align*}
	\overline{\rm rdim}_{L^{\infty}}(T,X,d,\pi_{*}\mu)&=\limsup_{\epsilon \to 0}\frac{\overline{h}_{\pi_{*}\mu}^K(T,\epsilon)}{\logf},\text{by~\cite[Lemma 3.2]{ycz23b}}\\
	&=\limsup_{\epsilon \to 0}\frac{\overline{h}_{\mu}^K(\tilde{T},\epsilon)}{\logf}\\
	&=\overline{\rm rdim}_{L^{\infty}}(\tilde{T},\tilde{X},\tilde{d},\mu), ~~\text{by~\cite[Theorem 1.1]{ycz23b}}.
\end{align*} 
\end{proof} 

\begin{rem}
Unfortunately, we do not have  a satisfactory   theorem  for the natural  extension of  mean dimension of any given TDSs. Given a TDS  $(X,T),$
let ${\rm mdim}(X,T)$ denote the mean dimension of  $X$. It is shown  in \cite[Propositions 3.5 and 3.7]{bs24} that  ${\rm mdim}(X,T)\geq {\rm mdim}(\tilde{X},\tilde{T})$   is valid for all TDSs, while there exists  some counter-examples showing the  inequality can be strict.
\end{rem}

\begin{proof}[Proof of Theorem \ref{thm 1.4}]
Let $(\tilde{X},\Tilde{T})$  be  the  nature extension of $(X,T)$, and $\pi$ be the projection to the $0^{th}$ coordinate. Choose $\nu \in E(\tilde{X},\Tilde{T})$ such that $\pi_{*}\nu=\mu$.  Then by~Lemma~\ref{lem 4.8}, $\overline{\rm rdim}_{L^{\infty}}(T,X,d,\mu)=\overline{\rm rdim}_{L^{\infty}}(\tilde{T},\tilde{X},\tilde{d},\nu)$.

By Theorem \ref{thm 4.5},  there exists $\tilde{X}_0\subset  \tilde{X}$ such that 
for $\nu$-a.e. $x \in \tilde{X}_0$,  there exists  closed weakly mixing subset $E(x) \subset \overline{W^s(x,\tilde{T})}$ such that $\overline{\rm {mdim}}_M(\tilde{T},E(x),\tilde{d})\geq \overline{\rm rdim}_{L^{\infty}}(\tilde{T},\tilde{X},\tilde{d},\nu).$ Let $X_0=\pi(\tilde{X_0})$. Then $X_0$ is a  full   $\mu$-measurable set. For each  $x\in X_0$, one can choose $\tilde{x}$ such that $\pi(\tilde{x})=x$. Since $\pi$ is open,  this implies that for all $x\in  X_0$, $E(x)=\pi(E(\tilde{x})) $ is  a closed weakly  mixing set for $T$.
Applying the Theorem \ref{lem 4.8}, we get
\begin{align*}
\overline{\rm {mdim}}_M({T},E(x),{d})
= &
\overline{\rm {mdim}}_M(\tilde{T},E(x),\tilde{d})\\
\geq& \overline{\rm rdim}_{L^{\infty}}(\tilde{T},\tilde{X},\tilde{d},\nu)=\overline{\rm rdim}_{L^{\infty}}(T,X,d,\mu).
\end{align*}
\end{proof}

Finally, we give the proof of Theorem \ref{thm 1.5}.

\begin{proof}[Proof of Theorem \ref{thm 1.5}]
We assume that $\over>0$; Otherwise, there is nothing left to prove.
Let $(\tilde{X},\Tilde{T})$  be  the  nature extension of $(X,T)$, and $\pi$ be the projection to the $0^{th}$ coordinate.   On the one hand, we have
\begin{align*}
&\limsup_{\delta \to 0}\frac{1}{\log\frac{1}{\delta}}\sup_{x\in \tilde{X}}h_{top}(T,\overline{W^s(x,\tilde{T})},d,\delta)\\
=&\limsup_{\delta \to 0}\frac{1}{\log\frac{1}{\delta}}\sup_{x\in \tilde{X}}h_{top}(T,W^s(x,\tilde{T}),d,\delta)\\
= &\limsup_{\delta \to 0}\frac{1}{\log\frac{1}{\delta}}\sup_{x\in \tilde{X}}h_{top}(T,\pi W^s(x,\tilde{T}),d,\delta), \text{by~(\ref{equ  4.9})~and~(\ref{equ 4.8})}\\
=&\limsup_{\delta \to 0}\frac{1}{\log\frac{1}{\delta}}\sup_{x\in {X}}h_{top}(T,W^s(x,{T}),d,\delta)\\
\leq &\over.
\end{align*}

On the other hand,  by \cite[Theorem 4.2]{shi} we have the variational principle for Katok $\epsilon$-entropy:
$$\overline{\rm mdim}_M(\tilde{T},\tilde{X},\tilde{d})=\limsup_{\delta \to 0}\frac{1}{\log\frac{1}{\delta}}\sup_{\mu \in E(\tilde{X},\tilde{T})}\overline{h}_\mu^{K}(\tilde{T},\epsilon).$$
Using Lemma \ref{lem 3.5}, (1) and (2), we  can obtain
$$\overline{\rm mdim}_M(\tilde{T},\tilde{X},\tilde{d})=\limsup_{\delta \to 0}\frac{1}{\log\frac{1}{\delta}}\sup_{\mu \in E(\tilde{X},\tilde{T})}\sup_{\diam \UU \leq \epsilon, \Leb(\UU)\geq \frac{\epsilon}{4}}h_\mu(\tilde{T},\mathcal{U}).$$
Since $\overline{\rm mdim}_M(\tilde{T},\tilde{X},\tilde{d}>0$, one can choose  a sequence of positive number $\{\delta\}_n$  that converges to $0$ as $n\to \infty$, a sequence $\{\mu_n\}_n$ of ergodic measures of $X$, and  a sequence $\{\mathcal{U}_n\}_n$ of finite open cover of $X$ with $\diam \UU_n \leq \delta_n$ and $ \Leb(\UU_n)\geq \frac{\delta_n}{4}$ such that $$\overline{\rm mdim}_M(\tilde{T},\tilde{X},\tilde{d})=\lim_{n \to \infty}\frac{1}{\log\frac{1}{\delta_n}}h_{\mu_n}(\tilde{T},\mathcal{U}_n)$$
and   $h_{\mu_n}(T)\geq h_{\mu_n}(\tilde{T},\mathcal{U}_n) >0$ for all  $n \geq 1$. Following the proof of Theorem \ref{thm 4.5},  for  $\mu_n$-a.e. $x\in \tilde{X}$ there is  closed set  $\text{supp}({(\mu_{n})}_x) \subset \overline{W^s(x,{\tilde{T}})} $ such that
\begin{align}\label{equ 3.14}
h_{\mu_n}(T, \UU_n) \leq h(T,\UU_n|\text{supp}({(\mu_{n})}_x)).
\end{align}
for every $n\geq 1$.
 This implies that
\begin{align*}
\over&=\lim_{n \to \infty}\frac{1}{\log\frac{1}{\delta_n}}h_{\mu_n}(\tilde{T},\mathcal{U}_n)\\
&\leq \limsup_{n \to \infty}\frac{1}{\log\frac{1}{\delta_n}}\sup_{x\in \tilde{X}}h(\tilde{T},\UU_n|\overline{W^s(x,{\tilde{T}})})\\
&\leq \limsup_{\delta \to 0}\frac{1}{\log\frac{1}{\delta}}\sup_{x\in \tilde{X}}h_{top}(\tilde{T},\overline{W^s(x,\tilde{T})},d,\delta),
\end{align*}
where the last inequality follows by the choice of $\mathcal{U}_n$. This completes the proof.

\end{proof}

\section{Application}
In this section, we  exhibit an application of aforementioned results.   The main result of this section is   the Theorem \ref{thm  5.2}. 
 \subsection{Application to different types of  topological entropies}

 For  TDSs,  except  the classical topological entropy, for instance, \emph{tail entropy  and preimage neighborhood entropy}, are  also    considered to  capture the complexity   of  dynamical systems from the topological viewpoint.   ESpecially in infinite entropy systems, a special curiosity is  the following: 
 
\emph{Question: Do the three  different types of   entropies have the same metric mean dimension?}

In other words, whether their divergent rates of $\epsilon$-entropies  are equal or not for any given an infinite entropy system. To apply our results,  we  provide a sketch   for the background  of  tail entropy  and preimage neighborhood entropy and recall the precise definitions.

  \begin{itemize}[leftmargin = 10pt]
  \item \textbf{Tail entropy} To explain the phenomenon of entropy appearing  at arbitrarily small scales,  \emph{tail entropy}   was first introduced by  Misiurewicz \cite{mis76} (or  also known as topological conditional entropy in that paper) defined by open covers and then by Burguet \cite{bur09} defined by spanning sets  to describe the entropy near any single orbit. Precisely,  tail entropy is defined   by
  $$h^{*}(T)=\lim_{\epsilon \to 0}\lim\limits_{\delta \to 0}GR(\sup_{x\in X}r_n(T, B_n(x,\epsilon),d,\delta)).$$
  
   \item \textbf{Preimage neighborhood entropy}  For non-invertible dynamical systems,  the backward orbits of  the phase space may posses pretty complicated  preimage structures. From the  quantitative viewpoint, different types of preimage  entropies(or pressures) were  introduced  in  \cite{hur95,cn05,lwz20,wwz23} to  characterize \emph{ how ``non-invertible"  the system is   and how the  ``non-invertibility" contributes to the entropy} . Inspired the notion of point-wise preimage entropy \cite{hur95} of TDSs and stable topological entropy \cite{wz21} considered in differentiable dynamical systems,  for any  given TDS we define \emph{preimage neighborhood entropy}  of $T$ as
     \begin{align*}
   h_{pre}(T,X)=
   \lim_{\epsilon \to 0}\lim_{\delta \to 0}GR(\sup_{x\in X, k \geq n} r_n(T, T^{-k}B(x,\epsilon),d,\delta)).
   \end{align*}
  \end{itemize}
\begin{df}\label{df 5.1}
Let $(X,T)$ be a TDS with a metric $d$. We define  {upper tail metric mean dimension of $X$  and  upper preimage neighborhood  metric mean dimension of $X$}   as
\begin{align*}
\overline{\rm mdim}_{M}^{*}(T,X,d)&=\limsup_{\delta \to 0}\frac{1}{\log\frac{1}{\delta} }GR(\sup_{x\in X}r_n(T, B_n(x,\epsilon),d,\delta)),\\
\overline{\rm mdim}_{pre}(T,X,d)&=
\limsup_{\delta \to 0}\frac{1}{\log\frac{1}{\delta} }GR(\sup_{x\in X,k \geq n} r_n(T, T^{-k}B(x,\epsilon),d,\delta)),
\end{align*}
respectively.
\end{df}

Noting that the limit $\lim_{\epsilon \to 0}$ is   exactly  the infimum $\inf_{\epsilon>0}$ in the aforementioned definitions, once the tail entropy and preimage neighborhood entropy   are infinite, then  the  entropy near a single orbits  and preimage neighborhood of phase spaces  at resolution of  $\epsilon>0$ are infinite.  Namely, for every $\epsilon >0$, one has
$$\lim\limits_{\delta \to 0}GR(\sup_{x\in X}r_n(T, B_n(x,\epsilon),d,\delta))=\lim_{\delta \to 0}GR(\sup_{x\in X, k \geq n} r_n(T, T^{-k}B(x,\epsilon),d,\delta))=\infty.$$  The following theorem  not only  suggests that the above Definition \ref{df 5.1} is  well-defined, but also shows that the  divergent rates of tail entropy and  preimage neighborhood entropy with the $\epsilon$-scale are equal, and hence   have the same with the metric mean dimension of topological entropy.
\begin{thm}\label{thm 5.2}
Let $(X,T)$ be a TDS with a metric $d$.  Then
$$\overline{\rm mdim}_{M}^{*}(T,X,d)=\overline{\rm mdim}_{pre}(T,X,d)=\overline{\rm mdim}_{M}(T,X,d).$$
\end{thm}

\begin{proof}
Fix $\epsilon>0$ and let $\delta >0$.  Notice that $$W_{\frac{\epsilon}{2}}^s(x,T)\subset \overline{B}_n(x,\frac{\epsilon}{2})\subset {B}_n(x,\epsilon) \subset X$$ for  every $n\in \mathbb{N}$ and all $x\in X$. Hence, one has 
$$\sup_{x\in X} r_n(T, W_{\frac{\epsilon}{2}}^s(x,T),d,\delta)\leq \sup_{x\in X} r_n(T, {B}_n(x,\epsilon),d,\delta) \leq r_n(T,X,d,\delta).$$
 Letting  $f=0$ in Theorem \ref{thm 1.2}, one has
$$\overline{\rm mdim}_{M}^{*}(T,X,d)=\overline{\rm mdim}_{M}(T,X,d).$$

 Similarly, using the fact  $T^{-n}W_{\frac{\epsilon}{2}}^s(x,T)\subset T^{-n}{B}(x, \epsilon)\subset X$
 for every $n \in \mathbb{N}$ and $x\in X$,  one has 
 $$\sup_{x\in X} r_n(T, T^{-n}W_{\frac{\epsilon}{2}}^s(x,T),d,\delta)\leq \sup_{x\in X, k\geq n} r_n(T, T^{-k}{B}(x, \epsilon),d,\delta) \leq r_n(T,X,d,\delta).$$
 By Theorem   \ref{thm 1.3},  we get  $$\overline{\rm mdim}_{pre}(T,X,d)=\overline{\rm mdim}_{M}(T,X,d).$$

\end{proof}

\begin{rem}
Replacing  $T^{-k}B(x,\epsilon)$ by $T^{-k}x$  for upper preimage metric mean dimension  in Definition \ref{df 5.1}, which we denote it by $\overline{\rm mdim}_{pre}^{*}(T,X,d)$ and  call it upper preimage metric mean dimension, we  can not  expect that the  Theorem \ref{thm 5.2}  still hold  for any  TDS in this case. There are  many counterexamples showing  an invertible dynamical  with positive metric mean dimension, while $\overline{\rm mdim}_{pre}^{*}(T,X,d)=0$ since $T^{-k}x$ is always a single point for every $k$. Readers can turn to  \cite{lr24} for more results about the   preimage metric mean dimensions of non-invertible dynamical systems.
\end{rem}

\begin{ex}
Let $[0,1]^{\mathbb{N}}$ denote the  infinite product of unit interval equipped with the metric $$d(x,y)=\sum_{n=0}^{\infty}\frac{|x_n-y_n|}{2^n}.$$
Let $\sigma: [0,1]^{\mathbb{N}} \rightarrow [0,1]^{\mathbb{N}}$ be the left shift map defined by $\sigma(x)=(x_{n+1})_{n\in \mathbb{N}}$ for every $x=(x_{n})_{n \in \mathbb{N}}$.  Then we have
$$\overline{\rm mdim}_{M}^{*}(\sigma,[0,1]^{\mathbb{N}},d)=\overline{\rm mdim}_{pre}(\sigma,[0,1]^{\mathbb{N}},d)=\overline{\rm mdim}_{M}(\sigma,[0,1]^{\mathbb{N}},d)=1.$$
\begin{proof}
We first show $\overline{\rm mdim}_{M}(\sigma,[0,1]^{\mathbb{N}},d)\leq 1$.  Fix $\epsilon>0$ and choose $l=[\log_2\frac{2}{\epsilon}]+1$ so that $\sum_{n>l}\frac{1}{2^n}<\frac{\epsilon}{2}.$ Set $A_1=\{\frac{\epsilon}{3}j:j=0,...,[\frac{3}{\epsilon}]\}$
and  let $E_1$ be the set of the points in $ [0,1]^{\mathbb{N}}$  so that   $x_i\in A_1$ for $0\leq i\leq n+l-1$ and  $x_i=1$ for  all $ i> n+l-1.$ Then  $E_1$  is an $(n,\epsilon)$-spanning of $[0,1]^{\mathbb{N}}$. This implies that $r_n(\sigma,[0,1]^{\mathbb{N}},d,\epsilon) \leq ([\frac{3}{\epsilon}]+1)^{n+l}$ and hence
$\overline{\rm mdim}_{M}(\sigma,[0,1]^{\mathbb{N}},d)\leq 1.$

We continue to show $\overline{\rm mdim}_{M}^{*}(\sigma,[0,1]^{\mathbb{N}},d)\geq 1$ and  $\overline{\rm mdim}_{pre}(\sigma,[0,1]^{\mathbb{N}},d)\geq 1$.    Fix $0<\epsilon<\frac{1}{2}$ and let  $\delta \in (0,1)$.  Choose $x=(x_n)_{n\geq 1}$ with $0\leq x_n\leq \frac{1}{2}$ for any $n \geq 1$. Notice that 
$$\Pi_{j=0}^{n-1}[x_j,x_j+\frac{\epsilon}{3}]\times \{(x_{n},...,)\}\subset B_n(x,\epsilon).$$
Set $A_2^{i}=\{x_i+j\delta :j=0,...,[\frac{\epsilon}{3\delta}]\}$ for every $0\leq i\leq n-1$
and  let 
$$E_2=\{(y_0,..,y_{n-1},x_{n},...,):y_i\in A_2^{i}~ \text{for}~ i=0,...,n-1\}. $$
Then  $E_2$  is a $(n,\delta)$-separated  set of $B_n(x,\epsilon)$  and hence
$$\sup_{x\in  [0,1]^{\mathbb{N}}}s_{n}(\sigma,B_n(x,\epsilon),d,\delta)\geq  ([\frac{\epsilon}{3\delta}]+1)^{n}.$$
This shows that  $\overline{\rm mdim}_{M}^{*}(\sigma,[0,1]^{\mathbb{N}},d)\geq 1$. 

Notice that  for every $x=(x_n)_n \in [0,1]^{\mathbb{Z}}$, one has 
$$\sigma ^{-n}(x)=\{(y_0,..,y_{n-1},x_{0},...,):y_i\in [0,1]~ \text{for}~ i=0,...,n-1\}.$$
Set $A_3=\{j\delta :j=0,...,[\frac{1}{\delta}]\}$
and  let 
$$E_3=\{(y_0,..,y_{n-1},x_{0},...,,):y_i\in A_3~ \text{for}~ i=0,...,n-1\}. $$
Then  $E_3$  is a $(n,\delta)$-separated  set of $\sigma ^{-n}(x)$  and hence
$$\sup_{x\in  [0,1]^{\mathbb{N}}, k \geq n}s_n(\sigma,\sigma ^{-k}(x),d,\delta)\geq  ([\frac{1}{\delta}]+1)^{n}.$$
We obtain that $GR(\sup_{x\in  [0,1]^{\mathbb{N}}, k\geq n}s_n(\sigma,\sigma ^{-k}(x,\epsilon),d,\delta))\geq  [\frac{1}{\delta}]+1$. This shows  $\overline{\rm mdim}_{pre}(\sigma,[0,1]^{\mathbb{N}},d)\geq  \overline{\rm mdim}_{pre}^{*}(\sigma,[0,1]^{\mathbb{N}},d) \geq 1.$

Finally, we get  $$\overline{\rm mdim}_{M}^{*}(\sigma,[0,1]^{\mathbb{N}},d)=\overline{\rm mdim}_{pre}(\sigma,[0,1]^{\mathbb{N}},d)=\overline{\rm mdim}_{M}(\sigma,[0,1]^{\mathbb{N}},d)=1.$$
\end{proof}
\end{ex}

\begin{rem}
In fact, the  example also shows that$$\overline{\rm mdim}_{pre}^{*}(\sigma,[0,1]^{\mathbb{N}}=\sup_{x\in [0,1]^{\mathbb{N}}}\limsup_{\delta \to 0}\frac{1}{\log\frac{1}{\delta} }\limsup_{n \to \infty}\frac{1}{n} \log r_n(T,\sigma^{-n}x,d,\delta)=1.$$
\end{rem}

\section{Final comments and open questions}

Finally, we finish the paper by presenting some remarks and open questions suggested by our main results.

Until now, we  solely explored   partial results involved the  metric mean dimensions of $\epsilon$-stable sets and  the relevant    chaotic phenomenons arising in infinite entropy systems.  So  more efforts toward the following two directions shall give us another  way  to understand the  topological dynamics of infinite entropy systems.

Compared with the Theorem \ref{thm 1.1},  one wonder whether   the  supremum  ``$\sup_{x\in X}$" in Theorems \ref{thm 1.2}, \ref{thm 1.3} and  \ref{thm 1.5} can be moved to the front of the upper limit ``$\limsup_{\delta \to 0}$". Despite    the   answer is positive  for  some certain dynamical systems, while it  still remains unclear whether   there are some counter-examples such that such  exchanges are false.  Therefore, even in zero potential case we  pose the following questions:

\begin{enumerate}
\item [(1)]
For any  TDS $(X,T)$  with a metric $d$ and    $\epsilon >0$, does
	\begin{align*}
		\overline{\rm mdim}_M(T,X,d)&=\sup_{x\in X} 	\overline{\rm mdim}_M^B(T,W_{\epsilon}^s(x,T),d)\\
		&=\sup_{x\in X} \overline{\rm mdim}_M^P(T,W_{\epsilon}^s(x,T),d)\\
		&=\sup_{x\in X} \overline{\rm mdim}_M(T,W_{\epsilon}^s(x,T),d)?
	\end{align*}   

\item [(2)]  For any  TDS $(X,T)$  with a metric $d$ and    $\epsilon >0$, does
\begin{align*}
	&\overline{\rm mdim}_M(T,X,d)\\
	=&\sup_{x\in X}\limsup_{\delta\to 0}\frac{1}{\log \frac{1}{\delta}} \limsup_{n\to \infty} \frac{1}{n}  \log r_n(T, T^{-n}W_\epsilon^s(x,T),d,\delta)?
\end{align*} 

\item [(3)]  For any  TDS $(X,T)$  with a metric $d$, does 

\begin{align*}
	\overline{\rm mdim}_M(T,X,d)&=\sup_{x\in X} 	\overline{\rm mdim}_M^B(T,W^s(x,T),d)\\
	&=\sup_{x\in X} \overline{\rm mdim}_M^P(T,W^s(x,T),d)\\
	&=\sup_{x\in X} \overline{\rm mdim}_M(T,W^s(x,T),d)?
\end{align*} 
\end{enumerate}

With  the help of Bowen factor formula of  Bowen topological entropy  and zero-dimensional principal extension  \cite{bdo4},  as discussed in \cite{fhyz12},  for finite positive entropy systems\footnote[4]{Actually,  by using the zero-dimensional principle extension \cite{dh13} their results holds for any positive entropy systems.} the authors estimated the  Bowen topological entropies of stable sets and scrambled sets in  which the    chaotic phenomenons  arise.   The present work  also  stimulates us to  consider  the Bowen and packing metric mean dimensions  of such chaotic sets. Since the  Bowen metric mean dimension   depends on  the choice of the  compatible metrics on phase space,  so it brings  another challenge for us to determine the Bowen metric mean dimensions of the extension systems and factor systems. In general, one can  construct some examples showing the analogous  Bowen factor formula for Bowen metric mean dimension is not valid,  and  the zero-dimensional principal extension  of metric mean dimension does not  preserve the metric mean dimensions of the dynamical  system and its extension systems.  This is the reason why the Theorem \ref{thm 1.5} does not  involve the Bowen metric mean dimension of stable sets.

\section{Appendix}
 The aim  of this section is to verify  the Bowen metric mean dimensions on subsets, defined by Bowen's original approach and Pesin's approach, are  equivalent.
 
 We first recall Bowen's approach  to  define Bowen topological entropy in \cite{b73}.  Let $\mathcal{U}$ be a finite  open cover of $X$. We write  $E \prec \mathcal{U}$ if  the set $E$ is   contained in some element of $\mathcal{U}$. Specially, for a given non-empty family  $\{E_i\}_{i=1}^{\infty}$ of subsets of $X$, we set $\{E_i\} \prec \mathcal{U}$   to refer each  set $E_i \prec \mathcal{U}$ for all $i$.  Given the non-empty subset  $Z$ of $X$, we define 
 $n_{T,\mathcal{U}}(E)$ as  the largest number such that
 $$T^{j}E\prec \mathcal{U}~\forall j=0,...,n_{T,\mathcal{U}}(E)-1,$$
 and  we put $n_{T,\mathcal{U}}(E)=0$ if $E \nprec \mathcal{U}$ and  $n_{T,\mathcal{U}}(E)=+\infty$ if $T^{j}E\prec \mathcal{U}$  for all $j\geq 0$. Then  we  define $D_{\mathcal{U}}(E):=e^{-n_{T,\mathcal{U}}(E)}$ as  the ``diameter'' of $E$ in sense of dynamical systems.
 
 Let  $Z\subset X$ and $s\in \mathbb{R}$, we set
 $$M_{\mathcal{U},s}(Z)=\lim_{\epsilon \to 0}\inf\{\sum_{i}e^{-n_{T,\mathcal{U}}(E_i)s}:\cup_i E_i \supset Z, D_{\mathcal{U}}(E_i)<\epsilon\},$$
 which can be  also  expressed as 
 $$M_{\mathcal{U},s}(Z)=\lim_{N \to \infty}\inf\{\sum_{i}e^{-n_{T,\mathcal{U}}(E_i)s}:\cup_i E_i \supset Z, D_{\mathcal{U}}(E_i)\geq N\}.$$

 It is readily  to check that  there is  a critical value  of parameter $s$  such that $M_{\mathcal{U},s}(Z)$  jumps  from  $\infty$ to $0$.  We set
 \begin{align*}
 h_{\mathcal{U}}(T,Z):
 =\inf\{s: M_{\mathcal{U},s}(Z)=0\}
 =\sup\{s: M_{\mathcal{U},s}(Z)=\infty\}.
 \end{align*}

 Next, we present Pesin's approach   for Bowen topological entropy. Let $\mathcal{W}_n(\mathcal{U})$ be the  collection of length $n$  of elements of  cover $\mathcal{U}$ with the form of $\underline{U}:=U_{i_0}U_{i_1}\cdots U_{i_{n-1}}$. This in turn gives  us the orbit addresses of the points of $X$ that can  move to the given open sets, that is,
 $$X(\underline{U}):=\{x\in X: T^jx\in U_{i_j}~~ \forall j=0,...,n-1\}.$$
 
 We set $\mathcal{W}(\mathcal{U})=\cup_{n}\mathcal{W}_n(\mathcal{U})$, and say a sub-collection $\Gamma \subset \mathcal{W}(\mathcal{U})$  covers $Z$ if $Z \subset \cup_{\underline{U}\in \Gamma} X(\underline{U})$. By $m(\underline{U})$ we denote the number of the  elements of $\underline{U}$.
 
 Let $Z\subset X$ be a non-empty subset, $s\in \mathbb{R}$ and $N\in \mathbb{N}$.
 Define 
 $$M_{N,s}(T,Z,\mathcal{U})=\inf_{\Gamma \subset \mathcal{W}(\mathcal{U})}\{\sum_{\underline U\in \Gamma}e^{-m(\underline{U})s}: \Gamma ~\text{covers}~ Z~~\text{and}~m(\underline{U})\geq N\}$$
 and let $M_{s}(T,Z,\mathcal{U})=\lim_{N\to \infty}M_{N,s}(T,Z,\mathcal{U})$.
 It is readily  to check that  there is  a critical value  of parameter $s$  such that $M_{s}(T,Z,\mathcal{U})$  jumps  from  $\infty$ to $0$.  We set
 \begin{align*}
 \hat h_{\mathcal{U}}(T,Z):
 &=\inf\{s: M_{s}(T,Z,\mathcal{U})=0\}\\
 &=\sup\{s: M_{s}(T,Z,\mathcal{U})=\infty\}.
 \end{align*}
 
\begin{prop}\label{prop 7.1}
For any non-empty subset $Z$ of $X$, we have
\begin{align*}
\overline{\rm {mdim}}_M^B(T,Z,d)&=\limsup_{\epsilon \to 0} \frac{1}{
\logf} \inf_{\diam \mathcal{U} \leq \epsilon}\hat h_{\mathcal{U}}(T,Z)\\
&=\limsup_{\epsilon \to 0} \frac{1}{
\logf}\inf_{\diam \mathcal{U} \leq \epsilon}  h_{\mathcal{U}}(T,Z).
\end{align*}
\end{prop}

\begin{proof}
The first equality can be proved by  considering a finite open cover  $\mathcal{U}$ of $X$ with  $\diam \UU\leq \epsilon$ and  $\Leb(\UU)\geq \frac{\epsilon}{4}$. For the second equality, it is straightforward to show  $\hat h_{\mathcal{U}}(T,Z)=h_{\mathcal{U}}(T,Z)$ for any finite open cover $\mathcal{U}$ of $X$(cf. \cite[Proposition 4, p.317]{pp84}).
\end{proof}


\section*{Acknowledgement}

   We would like to thank Prof. Jian Li, Dr. Xianjuan Liang, and  the dynamical system team of  Shantou University  for the hospitality and excellent academic atmosphere  when  the authors  attended  the  workshop on ``Topological Dynamical Systems and Ergodic Theory".   The authors also  thank  Prof. Wen Huang, Lei Jin and Tao Wang for many useful comments. Special thanks also goes to Prof.  Bingbing Liang for  pointing out the reference \cite{bs24} for us. These valuable suggestions and helpful comments leads to   a great improvement of  the  previous manuscript.

 The  second and third authors were  supported by the National Natural Science Foundation of China (Nos.12071222 and 11971236).  The work was also funded by the Priority Academic Program Development of Jiangsu Higher Education Institutions.  Besides, we would like to express our gratitude to Tianyuan Mathematical Center in Southwest China(No.11826102), Sichuan University and Southwest Jiaotong University for their support and hospitality.


\end{document}